\newtheorem{thm}{Theorem}[section]
\newtheorem*{thm*}{Theorem} %%%% theorem no number
\newtheorem{lem}[thm]{Lemma}  %%%% [thm] means number in sequence with Theorem
\newtheorem{prop}[thm]{Proposition}
\theoremstyle{definition}
\newtheorem{defn}[thm]{Definition}
\theoremstyle{remark}
\newtheorem{rmk}[thm]{Remark}
\numberwithin{equation}{section}
\numberwithin{figure}{section}
\DeclareMathAlphabet{\matholdcal}{OMS}{cmsy}{m}{n}
\newcommand{\ip}[2]{\langle #1 , #2 \rangle}    %this is to get the correct brackets for inner product
\newcommand\tr{\mbox{tr}\,}   %%%%%%%%% trace
\newcommand\C{\mathbb C}    %%%%%%%%% the set of complex numbers
\newcommand\R{\mathbb R}    %%%%%%%%% the set of real numbers
\newcommand\Oc{\mathbb O}    
\newcommand\h{\mathbb H} 
\newcommand\style{\mathfrak}
\newcommand\GL{\mathrm{GL}} %%% General Linear group GL
\newcommand\SL{\mathrm{SL}} %%% General Linear group GL
\newcommand\G{\mathbf{G}} %%% connected reductive group $G$
\newcommand\Spin{\mathrm{Spin}} %%% symplectic group
\newcommand\so{\mathrm{SO}} %%% special orthongal
\newcommand\prm{\mathfrak p_F} %%% prime ideal of local field F
\newcommand\of{\matholdcal O_F} %%% ring of integers of local field F
\newcommand{\ol}{\overline} %% overline for Appendix
\newcommand{\adj}[1]{\operatorname{adj}(#1)}
\DeclareMathOperator{\ran}{Im} %% image
\DeclareMathOperator{\aut}{Aut} %% automorphism group
\DeclareMathOperator{\id}{Id} %% identity
\DeclareMathOperator{\stab}{Stab} %% stabilizer
\DeclareMathOperator{\Int}{Int}  %% Conjugation action on the group
\DeclareMathOperator{\Ad}{Ad}  %% Group adjoint action
\DeclareMathOperator{\ind}{Ind} %%%%%% Induced representation
\DeclareMathOperator{\Hom}{Hom} %%% homomorphisms  
\begin{document}
%------------------------Front matter-----------------------
\title[Representations of $G_2$ distinguished by $\mathrm{SO}_4$]{Parabolically induced representations of $p$-adic $G_2$ distinguished by $\mathrm{SO}_4$, I}

\author[S.~Dijols]{Sarah Dijols}
\address{Mathematical Sciences 566, University of Calgary, Calgary, Alberta T2N 1N4, Canada}
\email{sarah.dijols@hotmail.fr}
\thanks{}

\subjclass[2010]{Primary 22E50; Secondary 20G41}
%% 22E50 Representations of Lie and linear algebraic groups over local fields
%% 22E35 Analysis on $p$-adic Lie groups
%% 20G41 Exceptional groups
\keywords{Parabolic induction, distinguished representations, $G_2$}
\date{\today}
\dedicatory{}
%----------------------Abstract-----------------------------
\begin{abstract}
We consider the parabolically induced representations of the symmetric space $\so_4\backslash G_2$ over a p-adic field using the geometric lemma when the inducing parabolic is $P_{\beta}$. Using an explicit description of the embedding of $G_2$ in $GL_8$, we characterize precisely the induced representations which are $(\so_4, \chi)$-distinguished, given a certain type of involutions is chosen.
\end{abstract}
\maketitle
\setcounter{tocdepth}{1}
\tableofcontents
%-------------------Main document---------------------------

\section{Introduction}\label{sec-intro}

The aim of this paper is to identify certain parabolically induced complex representations of the exceptional group $\G_2(F)$, over a $p$-adic field $F$, that admit a linear functional invariant under the special orthogonal group $\so_4(F)$. 

In the last two decades, motivated by the study of period integrals, many works \cite{FLO12, Off06, arnabO} have described the distinguished representations of various classical groups, for instance the general linear groups and the unitary groups by their symplectic, unitary or general linear subgroups. Around the same time, the far-reaching Sakellaridis-Venkatesh conjectures have reignited interest and gave further motivations in the description and classification of representations of $p$-adic symmetric spaces (as a particular instance of spherical varieties) $G \slash H$.
\newline

In this realm of research, very little has been understood regarding exceptional groups, a recent work of Gan-Gomez \cite{gangomez}, dealt with many low-rank varieties, including $\G_2 \slash \SL_3$ (a spherical variety which is not a symmetric space). Their work, however, does not deal with a precise description or classification of representations of $\G_2(F)$ which might be distinguished by $\SL_3(F)$. Indeed, such classification would require to use the geometric lemma method (also known as "orbit method" since it relies on analysing consecutively a set of parabolic orbits). Our paper constitutes the first instance of its application (implementable if the quotient is a symmetric space, or in the Galois case) to an exceptional group. The main tools in our investigation have been exposed in \cite{offen2017}. The drawback of our approach is that it allows us to \emph{only} deal with parabolically induced representations. 
\newline

The strategy described in the paper of Offen \cite{offen2017} consists in reducing the question of distinction of the induced representations of $G$ by $H$ to a question of distinction at the level of a subgroup $L_x \subseteq M$ associated to a representative $x$ for each orbit. It involves computing the relevant subgroups $Q_x= L_x\rtimes U_x$ and associated modular character. To do so, since none of the patterns of classical groups were reproducible in our context, we have used the mathematical software \textbf{SageMath} and an explicit embedding of $\G_2$ into $\GL_8$. In this paper, we deal with the case of the parabolic $P_{\beta}$, and leave the case of $P_{\alpha}, B$ to a subsequent paper. Following a method of \cite{Ginzburg}, we first find eleven double cosets' representatives and therefore as many parabolic orbits to be studied. To implement the subsequent steps, we need to write an explicit expression of the Levi subgroup $M_{\beta}$ (using Bruhat cells), identify the admissible orbits, and verify their closedness or openess. We have also identified the matching elements in $W_{\beta}\backslash W\slash W_{\beta}$ for each orbit representative. Finally, the results where we identified the Levi subgroups $L= M \cap w_x M w_x^{-1}$, for each matching element $w_x$, and also implemented various computations to check properties of the orbits (see in \cite{offen2017}) using a modified version of the orbit representatives have been included in the form of codes (see also the Appendix). A better strategy was eventually found using a stricter definition of admissibility (see Definition \ref{strictadm}), already offered in the literature \cite{offenluminy}. 

Our main results are the following: 
\begin{thm}[Closed orbit] \label{closedorbit}
Let $\chi$ be a character of $\so_4(F)$. It is a quadratic character of $F^{\times}$. It can be seen as a character of $\GL_2$ (those are given by $\chi \circ \det$ for a quasi-character $\chi$ of $F^{\times}$). Let $P_{\beta}$ denote the maximal parabolic corresponding to the root $\beta$. The parabolic induced representations of $G_2$ which are $(\so_4,\chi)$-distinguished include the following representations:
\begin{itemize}
\item The induction from $P_{\beta}$ to $G_2$ of the reducible principal series of $\GL_2$, $I(\chi\delta_{P_{\beta}}^{1/2}|.|^{-1/2}\otimes |.|)$, and the induction of the direct sum of this reducible principal series with any smooth representation of $\GL_2$.
\item The induced representation $I_{P_{\beta}}^{G_2}(\chi\delta_{P_{\beta}}^{1/2})$. 
\item The induced representation $I_{P_{\beta}}^{G_2}(\sigma)$ with $\sigma$ a cuspidal representation of $\GL_2$ whose central character is equal to $(\chi\delta_{P_{\beta}}^{1/2})|_Z$ or the induced representation of the direct sum of such representation with any smooth representation of $\GL_2$.
\item The induced representation $I_{P_{\beta}}^{G_2}(\tau \oplus \chi\delta_{P_{\beta}}^{1/2})$ for $\tau$ any smooth representation of $\GL_2$.
\end{itemize}
\end{thm}

\begin{thm*}[Distinguished induced parabolic representations and admissible orbits]
We take the involution $\theta$ defining $\so_4(F) = \G_2^{\theta}(F)$ to be of the form $\theta_{t_i}$ for $i \in \{0,1,2\}$ as defined in the Subsection \ref{convention}, $M_{\beta}$ as defined in Equation \ref{mb}.
The parabolically induced representations from the parabolic $P_{\beta}$ of $\G_2(F)$ distinguished by $\so_4(F)$ whose linear forms arise from admissible, open or closed, orbits are necessarily of the form given in the previous Theorem \ref{closedorbit}.
\end{thm*}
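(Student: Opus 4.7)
The plan is to apply the geometric lemma of Offen \cite{offen2017} to the parabolic $P_{\beta}$ and the involution $\theta = \theta_{t_i}$. This produces a filtration of $I_{P_{\beta}}^{\G_2}(\sigma)|_{\so_4}$ whose subquotients are indexed by the $P_{\beta}\times\so_4$-double cosets in $\G_2$. A necessary condition for a subquotient to contribute to $(\so_4,\chi)$-distinction via a given orbit is that the orbit be admissible in the sense of Offen--Luminy \cite{offenluminy} (Definition \ref{strictadm}); in that case, distinction of the whole induced representation reduces to distinction of $\sigma$ by a subgroup $L_x\subset M_{\beta}$ carrying an explicit modular twist.

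First I would enumerate the eleven orbit representatives $x_1,\dots,x_{11}$ following the procedure of Ginzburg \cite{Ginzburg}. For each $x_j$ I would use the explicit embedding $\G_2\hookrightarrow \GL_8$ and the Bruhat description of $M_{\beta}$ (Equation \ref{mb}) to compute the matching Weyl element $w_{x_j}\in W_{\beta}\backslash W/W_{\beta}$, the Levi subgroup $L_{x_j}=M_{\beta}\cap w_{x_j}M_{\beta}w_{x_j}^{-1}$, and the modular character $\delta_{P_{\beta}}^{1/2}\cdot(\delta_{Q_{x_j}}|_{L_{x_j}})^{-1/2}$ governing the distinction condition. These calculations would be carried out in \textbf{SageMath} with the explicit matrix realization, as indicated in the Appendix.

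Next I would apply the strict admissibility criterion from Definition \ref{strictadm} to discard those orbits that cannot contribute, so that only a small subset of the eleven remains, each of which is either open or closed. For every surviving orbit the criterion gives a precise distinction condition on $\sigma$: namely, $\sigma$ must be $L_{x_j}$-distinguished with respect to a character obtained by twisting $\chi\delta_{P_{\beta}}^{1/2}$. Translating this condition along the isomorphism $M_{\beta}\simeq \GL_2$ produces exactly the four bullets of Theorem \ref{closedorbit}: the reducible principal series $I(\chi\delta_{P_{\beta}}^{1/2}|\cdot|^{-1/2}\otimes|\cdot|)$, the one-dimensional $\chi\delta_{P_{\beta}}^{1/2}$, the cuspidal $\sigma$ whose central character matches $(\chi\delta_{P_{\beta}}^{1/2})|_Z$, and the direct-sum variants obtained by adjoining an arbitrary smooth $\tau$.

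The main obstacle will be the orbit-by-orbit computation of the $L_{x_j}$: the embedding of $\G_2$ in $\GL_8$ makes the explicit matrices opaque, and the twisted involutions $\theta_{t_i}$ interact nontrivially with the Weyl conjugates of $M_{\beta}$. In particular, checking strict admissibility for the non-closed orbits requires ruling out invariants of the twisted action on the unipotent radical of $Q_{x_j}$, and it is this obstruction that I expect to eliminate most of the intermediate orbits. Once those computations are organized and the admissible open/closed orbits are isolated, matching the surviving distinction conditions on $\sigma$ to the four cases of Theorem \ref{closedorbit} should be essentially bookkeeping.
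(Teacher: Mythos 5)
Your overall strategy coincides with the paper's: apply Offen's Propositions 7.1/7.2, observe that the hypothesis $x\in N_{G,\theta}(M)$ is exactly the strict admissibility of Definition \ref{strictadm}, run the admissibility check on the eleven representatives, and reduce the surviving cases to $\GL_2$-distinction via Proposition \ref{GL2distinctionofPS}. In that sense, the route is the same.

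There is, however, one substantive misstatement in your plan. You claim that after the strict admissibility filter, \emph{each} surviving orbit is either open or closed. This is not what happens. Per Proposition \ref{adm}, the strictly admissible orbits are those of $x_2$, $x_5$, $x_7$, $x_{10}$; the paper shows that only $x_2$ and $x_7$ are closed, while $x_5$ and $x_{10}$ are \emph{neither} open nor closed (for $x_5$, $\theta_x(P)\cap P = M\cdot U_{2\alpha+\beta}$, strictly between $M$ and $P$). More importantly, the unique open orbit, that of $x_{11} = w_0[0,1,r_3,0]$, turns out to be \emph{non-admissible}, so Proposition 7.2 for the open case never applies. The theorem's hypothesis is the \emph{conjunction} ``admissible \emph{and} (open or closed)''; the content of the proof is precisely that this intersection consists only of the closed orbits, which collapses the problem back to Theorem \ref{closedorbit}. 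Since your plan expects a contribution from an admissible open orbit, it would not naturally surface this negative finding; you would have to carry out the admissibility check on $x_{11}$ explicitly and discover that it fails. Once that correction is made, the rest of your bookkeeping matches the paper's argument.
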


Our computations also reveal a mysterious and exciting phenomenon with the open orbits which are parametrized by the number of quadratic extensions $E$ of $F$, see the Proposition \ref{amazing}.
\newline
The context of dealing with the split exceptional group $\G_2$ gives to this paper its computational (via \textit{SageMath}) nature. All our codes and \textit{SageMath} computation are available at the following link: \url{https://github.com/sarahdijols/G2SO4}. It is worth mentioning that this software helps us only to multiply many 8-dimensional matrices, but all these multiplications could be, in principle, done by hand. No programming skills are needed to understand the codes available at this link. A byproduct of the strategy developed in this work is to provide explicit expressions of the tori, roots subgroups, and Levi subgroups of $G_2$, which allow, for instance, to compute the modulus for the maximal parabolic subgroups and the Borel of $G_2$. We believe these codes could be useful to the math community.
\newline

Here we briefly outline the contents of the paper. 
In Section \ref{sec-note}, we establish notation and recall some basic definitions.  
Section \ref{sec-pblc-dist} contains a review of two key results proved by Offen in \cite{offen2017}, and Section  \ref{sec-data} provides some result on the distinguished representations that form the inducting data for the representations of $\G_2(F)$ studied here.
We study the structure of the symmetric space $\G_2(F) / \so_4(F)$ in Section \ref{sec-g2}; additional detail is provided in Appendix \ref{sec-g2-apdx}, where an embedding of $\G_2(F)$ into $\GL_8(F)$ is discussed.
In Section \ref{doublecosets}, we describe the double cosets and double cosets representatives, while in Section \ref{analysisorb}, we study the orbits in $\G_2(F) / \so_4(F)$ under the twisted action of standard parabolic subgroups of $\G_2(F)$. Finally, the main results on $\so_4(F)$-distinguished parabolically induced representations of $\G_2(F)$ are stated and proved in Section \ref{sec-results}.
\newline

We, finally, mention here that this paper considers only sufficient conditions for distinction, as presented in the Propositions 7.1 and 7.2 of \cite{offen2017}. The necessary conditions which may involve using Proposition 4.1 in \cite{offen2017} will be addressed, to the greatest extent possible, in our subsequent work. The reader will notice that all the ingredients have been prepared to do so in the form of codes, as the algorithm to compute the expressions for the subgroups $L_x \subset M$ and the relevant modular characters have been written and tested (see the files "delta-functions-Pb-Pa-min.ipynb" and "delta-functions-Q-x-clean(1).ipynb" in particular).

\subsection*{Acknowledgements}
The author would like to express her gratitude to Steven Spallone for communicating, in its entirety, the material in Appendix \ref{sec-g2-apdx} on the structure of $\G_2$ as the group of automorphisms of the Cayley algebra. She also thanks Qing Zhang and David Ginzburg for giving her a few precious references and hints on $\G_2$, and Dipendra Prasad for some general comments. She would like to warmly thank Arnab Mitra and Jerrod Smith for many discussions on the method and on the earlier calculations done in this work, and Jerrod Smith for his contribution to a few files in the code, and to the organization of this paper. The author has benefited from very good work conditions at the YMSC, Beijing, and at the MPIM, Bonn.

\section{Notation and Preliminaries}\label{sec-note}
Let $F$ be a non-Archimedean local field of characteristic zero and odd residual characteristic. Let $\of$ be the ring of integers of $F$ with prime ideal $\prm$.
% Note: Muic assumes characteristic zero in the Duke 1997 unitary dual paper
 Fix a uniformizer $\varpi$ of $F$; note that $\prm = \varpi \of$. Let $q$ be the cardinality of the residue field $k_F = \of / \prm$.  Let $|\cdot |_F$ denote the normalized absolute value on $F$ such that $|\varpi|_F = q^{-1}$.  We write $|\cdot|$ for the usual absolute value on the field $\C$ of complex numbers.  
% Let $\val$ denote the normalized additive valuation on $F$, that is, for $a \in F$,  $|a| = q^{-\val(a)}$.  
% Recall that $\of = \{ a \in F : |a| \leq 1 \}, \prm  = \{ a \in F : |a| < 1 \},$ and  $\of ^\times  = \of \setminus \prm = \{ u \in F : |u| = 1\}$.

Let $G=\G(F)$ be the $F$-points of a connected reductive group defined over $F$. We let $e$ denote the identity element of $G$. For any $g \in G$, we denote the inner $F$-automorphism of $G$ given by conjugation by $g$ by $\Int_g$.  That is, $\Int_g(x) = g x g^{-1}$ for all $x \in G$.  Recall that the map $\Int : G \rightarrow \aut_F(G)$ given by  $g \mapsto \Int_g$ is a group homomorphism.  Moreover, $\ker(\Int) = Z_G$ is the centre of $G$.  Note that if  $g^2 = e$, then $\Int_g$ is an involution, that is, an order two automorphism.  Indeed, if $g^2 = e$, then for any $x \in G$
\[
(\Int_g)^2(x) = \Int_g \circ \Int_g(x) = g^2 x g^{-2} = exe = x,
\]
and $(\Int_g)^2 = \id_G$ is the identity map on $G$.  %%% converse is false: look at the finite group D_8; symmetries of the square: r has order 4, but Int_r is order 2.
Observe that $x\in G$ is fixed by $\Int_g$ if and only if $x \in C_G(g)$, where $C_G(g)$ is the centralizer of $g$ in $G$.

All representations are over complex vector spaces.  We will often abuse notation and refer to a representation $(\pi,V)$ of $G$ simply as $\pi$.
We write $\bm{1}_G:G \rightarrow \C^\times$ for the trivial character of $G$, that is, $\bm{1}_G(g) = 1$ for all $g \in G$.
We assume that all representations $(\pi,V)$ of $G$ are smooth in the sense that the stabilizer of any vector $v \in V$ is an open subgroup of $G$.
A character of $G$ is a one-dimensional smooth representation of $G$ (not necessarily unitary).

Let $P$ be a parabolic subgroup of $G$.  Let $N$ be the unipotent radical of $P$, and let $M$ be a Levi subgroup of $P$.  Let $\delta_P : P \rightarrow \R_{>0}$ be the modular character of $P$.  Recall that $\delta_P(p) = |\det \Ad_{\style n}(p)|_F$ for all $p \in P$, where $\Ad_{\style n}$ denotes the adjoint action of $P$ on the Lie algebra $\style n$ of $N$ \cite{Casselman-book}.
Given a smooth representation $(\sigma,W)$ of $M$, we denote the normalized parabolic induction of $\sigma$ along $P$ by $I_P^G(\sigma) = \ind_P^G(\delta_P^{1/2}\otimes\sigma)$.

\subsection{Distinguished representations}
Let $H$ be a closed subgroup of $G$, and let $\chi$ be a character of $H$.
Let $(\pi,V)$ be a smooth representation of $G$. 

\begin{defn}
The representation $(\pi,V)$ is said to be $(H,\chi)$-distinguished if there exists a nonzero linear functional $\lambda$ in $\Hom_H(\pi,\chi)$.
\end{defn}

If $(\pi,V)$ is $(H,\bm{1}_H)$-distinguished, then we will simply say that $(\pi,V)$ is $H$-distinguished.  The $H$-distinguished representations of $G$ are precisely those representations of $G$ that are relevant to the study of harmonic analysis on the quotient $G/H$. Indeed, given a nonzero $H$-invariant linear functional $\lambda$ in  $\Hom_H(\pi,\bm{1}_H)$ the linear transformation sending $v \in V$ to the function $\varphi_{\lambda,v}$, where $\varphi_{\lambda,v}(g) = \ip{\lambda}{\pi(g^{-1})v}$ for all $g\in G$, defines an intertwining operator from $(\pi,V)$ to the regular representation of $G$ on the smooth complex valued functions on $G/H$.  Moreover, any such intertwining operator arises this way.
In studying distinguished parabolically induced representations it is necessary to consider $(H,\chi)$-distinguished representations at the level of the inducing data.

The following elementary result is quite useful.
\begin{lem}\label{lem-dist-central-char}
Let $(\pi,V)$ be a representation of $G$.  Suppose that $\pi$ admits a central character $\omega_\pi$.
Let $\chi$ be a character of $H$. 
If $\pi$ is $(H,\chi)$-distinguished, then $\chi \rvert_{H\cap Z} = \omega_\pi \rvert_{H\cap Z}$.
\end{lem}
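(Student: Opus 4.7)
The plan is to exploit the nonzero invariant functional guaranteed by $(H,\chi)$-distinction and evaluate it on vectors acted upon by elements of $H \cap Z$, computing in two different ways. Since $\pi$ is $(H,\chi)$-distinguished, fix a nonzero $\lambda \in \Hom_H(\pi,\chi)$. Because $\lambda$ is nonzero, there exists at least one vector $v \in V$ with $\lambda(v) \neq 0$; this vector is the key input for the argument.

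Next, I would pick an arbitrary $z \in H \cap Z$ and compute $\lambda(\pi(z)v)$ two ways. Using $z \in H$ and the $(H,\chi)$-equivariance of $\lambda$, one gets $\lambda(\pi(z)v) = \chi(z)\lambda(v)$. Using $z \in Z$ together with the existence of the central character $\omega_\pi$, we have $\pi(z)v = \omega_\pi(z)v$, so by linearity $\lambda(\pi(z)v) = \omega_\pi(z)\lambda(v)$. Equating the two expressions yields
\[
\bigl(\chi(z) - \omega_\pi(z)\bigr)\lambda(v) = 0.
\]

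Since $\lambda(v) \neq 0$, we may cancel and conclude $\chi(z) = \omega_\pi(z)$. As $z \in H \cap Z$ was arbitrary, this gives the desired equality $\chi|_{H \cap Z} = \omega_\pi|_{H \cap Z}$. There is no real obstacle; the only point to be mindful of is selecting $v$ so that $\lambda(v) \neq 0$, which is immediate from $\lambda \neq 0$, and checking that the two computations of $\lambda(\pi(z)v)$ are both legitimate (the first requires $z \in H$, the second requires $z \in Z$, and both hold for $z \in H \cap Z$).
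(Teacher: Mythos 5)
Your proof is correct and follows exactly the paper's own argument: apply the nonzero $H$-equivariant functional to $\pi(z)v$ for a vector $v$ with $\lambda(v)\neq 0$, use $(H,\chi)$-equivariance on one side and the central character on the other, and cancel $\lambda(v)$. There is no meaningful difference in approach or content.
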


\begin{proof}
Since $\pi$ is $(H,\chi)$-distinguished, there exists a nonzero linear functional $\lambda$ in $\Hom_H(\pi,\chi)$.  Let $v \in V$ so that $\ip{\lambda}{v} \neq 0$.  Suppose that $z \in H \cap Z$.
Then since $\lambda$ is $H$-invariant and $\pi$ has central character $\omega_\pi$ we have that
\begin{align*}
\chi(z)\ip{\lambda}{v} = \ip{\lambda}{\pi(z)v} = \ip{\lambda}{\omega_{\pi}(z) v} = \omega_{\pi}(z) \ip{\lambda}{v}.
\end{align*}
Therefore,
\begin{align*}
0 = (\chi(z) - \omega_\pi(z)) \ip{\lambda}{v}
\end{align*}
and since $\ip{\lambda}{v}\neq 0$ it follows that $\chi(z) = \omega_\pi(z)$.  Therefore, the restriction $\chi \rvert_{H\cap Z}$ of $\chi$ to $H\cap Z$ agrees with the restricted central character $\omega_\pi \rvert_{H\cap Z}$.
\end{proof}

\section{Distinction for parabolically induced representations}\label{sec-pblc-dist}
Here we recall the general results of Offen \cite{offen2017} that we utilize below.  We use mostly the same notations as Offen.

Let $G=\G(F)$ be the $F$-points of a connected reductive group $\G$ defined over $F$.  Let $\theta$ be an $F$-rational involution of $\G$.  Let $H = \G^\theta(F)$ be the $F$-points of the $\theta$-fixed set $\G^\theta$ in $\G$.  
Let $X = \{ g \in G : g \theta(g) = e \}$.  Elements of the set $X$ are referred to as the $\theta$-split elements in $G$.
  The set $X$ carries a $G$-action given by
\begin{align*}
(g,x) & \mapsto g\cdot x = g x \theta(g)^{-1}
\end{align*}
for all $g\in G$ and $x \in X$.
Of course, $e\theta(e) = e$, so the identity element of $G$ lies in $X$.
The stabilizer of $e$ under the $G$-action on $X$ is the subgroup $H$ of $\theta$-fixed points.
It follows that the map $G \rightarrow X$ given by $g \mapsto g \cdot e$ defines an embedding of the symmetric space $G / H$ in $X$ as the $G$-orbit of the identity.

Let $x\in X$ be a $\theta$-split element of $G$.  The $F$-rational automorphism $\theta_x$ of $G$ defined by
\begin{align*}
\theta_x(g) & = x\theta(g)x^{-1} & \text{for all } \, g \in G
\end{align*}
is an involution.  For any subgroup $K$ of $G$ let $K_x = \stab_K(x)$ be the stabilizer of $x$ in $K$ for the $G$ action on $X$.  Then $H = G_e$ and $K_x = K^{\theta_x}$ for any subgroup $K$ of $G$ and $x \in X$; however,  $K$ need not be $\theta_x$-stable so it is convenient to note that $K_x = (K \cap \theta_x(K))^{\theta_x}$.

We will assume that $\G$ is split over $F$.  Let $B$ be a Borel subgroup of $G$ with unipotent radical $N$.  By \cite[Lemma 2.4]{helminckwang1993} there exists a $\theta$-stable maximal $F$-split torus $T$ of $G$ contained in $B$.  We have that $B = TN$.
A parabolic subgroup $P$ of $G$ is standard if it contains the Borel subgroup $B$.
Suppose that $P$ is a standard parabolic subgroup of $G$, then $P$ admits a unique Levi subgroup $M$ that contains $T$.
Let $U$ be the unipotent radical of $P$.  We will always work with a standard Levi factorization $P = MU$ with $T \subseteq M$.
Let $N_{G,\theta}(M) = \{ g \in G : M = g\theta(M) g^{-1} \}$.

Let $\chi$ be a character of $H$ and let $\eta \in G$.  Write $\chi^{\eta^{-1}}$ for the character of $\eta^{-1}H\eta$ given by $\chi^{\eta^{-1}}(h') = \chi (\eta h' \eta^{-1})$ for all $h' \in \eta^{-1}H\eta$.
\newline

The following proposition deals with the case of a closed orbit.
\begin{prop}[For instance Proposition 7.1 in \cite{offen2017}] \label{offen7.1}
Let $\chi$ be a character of $H$.
Let $P = MU$ be a standard parabolic subgroup of $G$ with unipotent radical $U$ and Levi factor $M$. Let $(\sigma, W)$ be a smooth representation of $M$.
Suppose that $\eta \in G$ so that $x = \eta \cdot e \in N_{G,\theta}(M)$ and $\theta_x(P) = P$.  
If $\sigma$ is $(M_x, \delta_{P_x}\delta_P^{-1/2}\chi^{\eta^{-1}})$-distinguished, then $I_P^G(\sigma)$ is $(H,\chi)$-distinguished.  
\end{prop}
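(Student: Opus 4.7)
The plan is to construct an explicit nonzero $(H,\chi)$-equivariant linear functional on $I_P^G(\sigma)$ by an orbital integral over the closed double coset $P\eta H$, in the spirit of the Mackey--Frobenius approach to distinction of parabolically induced representations. Fix a nonzero $\ell \in \Hom_{M_x}(\sigma, \delta_{P_x}\delta_P^{-1/2}\chi^{\eta^{-1}})$ and, for $f$ in the space of $I_P^G(\sigma)$, consider the candidate
\[
\Lambda(f) \;=\; \int_{H_0\backslash H} \ell\bigl(f(\eta h)\bigr)\,\chi(h)^{-1}\, d\bar h,
\]
where $H_0 = H\cap \eta^{-1}P\eta$ and $d\bar h$ is a suitable measure on the quotient.

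I would carry this out in four steps. First, using the identity $x = \eta\cdot e$, check that the twisted stabilizer satisfies $\eta H\eta^{-1}=G_x$; combine the hypothesis $x \in N_{G,\theta}(M)$ with $\theta_x(P)=P$ to deduce that $P$, $M$, and $U$ are all $\theta_x$-stable, yielding the fixed-point factorization $P_x = M_x U_x$ and the identification of $\eta H_0 \eta^{-1}$ with $P_x$. Second, compute the left transformation of $\ell(f(\eta h))\chi(h)^{-1}$ under an element $h_0 \in H_0$: writing $\eta h_0\eta^{-1}=m_0u_0$, the left $P$-equivariance of $f$ together with the triviality of $\sigma$ on $U$ reduces the computation to $\delta_P^{1/2}(m_0)\ell(\sigma(m_0) f(\eta h))$, and invoking the $(M_x,\delta_{P_x}\delta_P^{-1/2}\chi^{\eta^{-1}})$-distinction hypothesis collapses the factor to the ratio of modular characters governing the existence of $d\bar h$ on $H_0\backslash H$. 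Third, use closedness of the orbit $P\cdot x$ in $X$ (the defining feature of the \emph{closed orbit} case) to conclude that $P\eta H$ is closed in $G$, so that $H_0\backslash H \simeq P\eta H/P$ is a closed, hence compact, subset of the flag variety $P\backslash G$; the integral then converges for any smooth section $f$ (it is, in fact, a finite sum). Finally, the $(H,\chi)$-equivariance of $\Lambda$ is immediate from the right $H$-invariance of $d\bar h$ via the change of variables $h \mapsto hh'$, and nonvanishing is obtained by choosing $f$ supported in a small $P$-invariant neighbourhood of $\eta$ in $G$ with $f(\eta)$ picked so that $\ell(f(\eta))\neq 0$.

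The main obstacle is the modular-character bookkeeping in the second step: one must verify that the combined factor $\delta_P^{1/2}\cdot\delta_{P_x}\delta_P^{-1/2}\cdot\chi^{\eta^{-1}}$, read through the isomorphism $\eta(\cdot)\eta^{-1}\colon H_0 \xrightarrow{\sim} P_x$, matches the modulus $\delta_H/\delta_{H_0}$ controlling the right-invariant measure on $H_0\backslash H$, while the various $\chi$-contributions telescope correctly. The precise form of the character $\delta_{P_x}\delta_P^{-1/2}\chi^{\eta^{-1}}$ appearing in the hypothesis on $\sigma$ is dictated by exactly this matching; once it is confirmed, the other three steps are essentially formal and carry the proof to completion.
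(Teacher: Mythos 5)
This proposition is cited directly from Offen's paper \cite{offen2017} and is not proved in the present paper, so there is no internal proof to compare against; what you have written is the standard Mackey-theoretic orbital-integral argument that underlies Offen's own proof of the closed-orbit case, and all four of your steps are on the right track. One detail you fold silently into the ``modular-character bookkeeping'' deserves to be made explicit: for the factors $\chi^{\eta^{-1}}(m_0)\,\chi(h_0)^{-1}$ to collapse as you claim (leaving only $\delta_{P_x}(m_0)=\delta_{H_0}(h_0)$, the modulus required by the quotient measure on $H_0\backslash H$), you must also use that $\chi$ is trivial on $\eta^{-1}U_x\eta\subseteq H$; this holds because that subgroup is unipotent, hence lies in the derived group of $H$, and any smooth character of a reductive $p$-adic group kills unipotent elements. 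With that observation inserted, your construction closes the argument.
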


Let us mention that an analogous Proposition for the open orbit is given in \cite{offen2017}.
When applying the above results in Section \ref{sec-results}, it will be important for us to carefully choose representatives for the various $P$-orbits in $X$ following \cite[Section 3]{offen2017}.  We discuss the parabolic orbits in the setting of $G = \G_2(F)$ and $H=\so_4(F)$ in Section \ref{sec-adm-orb}.

Finally, let us recall here a recent related result of Prasad in \cite{genericprasad} which assures us of the existence of a generic unitary principal series representation of $\G_2(F)$ distinguished by $\so_4(F)$.
 
\begin{prop}[Proposition 11 in \cite{genericprasad}]
Let $(G, \theta)$ be a symmetric space over a finite or a non-Archimedean local field $k$ which is quasi-split over $k$, thus there is a Borel subgroup $B$ of $G$ over $k$ with $B \cap \theta(B) = T$, a maximal torus of $G$ over $k$. If $k$ is finite, assume that its cardinality is large enough (for a given $G$). Then there is an irreducible generic unitary principal series representation of $G(k)$ distinguished by $G^{\theta}(k)$.
\end{prop}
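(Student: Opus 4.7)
The plan is to produce the desired representation by applying the open-orbit analogue of Proposition~\ref{offen7.1} (Proposition 7.2 of \cite{offen2017}) to the Borel subgroup $B$ and then exhibiting a suitable inducing character. The first ingredient is that the $B$-orbit of the base point $e \in X = \{g \in G : g\theta(g) = e\}$ is open in $X$. Indeed, the stabilizer is $B_e = B \cap H$, and any element of $B \cap H$ is $\theta$-fixed and lies in $B$, hence in $B \cap \theta(B) = T$, so $B_e = T^\theta$. Combined with the observation that the hypothesis $B \cap \theta(B) = T$ forces $\theta$ to send $N$ onto the opposite unipotent radical $N^-$ (since $\theta(B)$ is a Borel containing $T$ with trivial positive-root intersection with $B$), a standard count yields $\dim G^\theta = \dim T^\theta + \dim N$ and hence $\dim (B \cdot e) = \dim B - \dim T^\theta = \dim X$, establishing openness.

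Next I would apply the open-orbit criterion from \cite{offen2017} with $P = B$, $M = T$ and $\eta = x = e$. It reduces the $H$-distinction of $I_B^G(\chi)$, for a character $\chi$ of $T$, to a single compatibility condition of the shape $\chi|_{T^\theta} = \mu|_{T^\theta}$, where $\mu$ is an explicit modular character built from $\delta_B$ and $\delta_{B_e}$; since $B_e = T^\theta$ is abelian, $\delta_{B_e}$ is trivial and $\mu$ reduces to a restriction of a half-power of $\delta_B$. Write $\mathcal{C}$ for the set of unitary characters of $T$ satisfying this compatibility; it is a torsor over the kernel of the restriction map $\widehat{T}_{\mathrm{unit}} \to \widehat{T^\theta}_{\mathrm{unit}}$.

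Now I would select $\chi \in \mathcal{C}$ for which $I_B^G(\chi)$ is irreducible; such a $\chi$ automatically yields a generic representation, since every irreducible principal series from the Borel of a quasi-split group carries a Whittaker functional for a non-degenerate character of $N$, and unitarity of $\chi$ transfers to $I_B^G(\chi)$. The reducibility locus is cut out by finitely many root-hyperplane conditions on $\chi$ (of the form $\chi \circ \alpha^\vee = |\cdot|_F^{\pm 1}$ over a $p$-adic field, or $\chi$ fixed by some non-trivial Weyl element over a finite field). Over a $p$-adic field, $\mathcal{C}$ is positive-dimensional and the reducibility locus is a proper closed subvariety, so a generic $\chi \in \mathcal{C}$ works; over a finite field of cardinality $q$, the size of $\mathcal{C}$ grows like $q^{\dim T - \dim T^\theta}$ while the reducibility locus grows strictly slower, so for $q$ large enough a good $\chi$ exists.

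The main obstacle will be the finite-field case of this final step, where one has to make the cardinality bound on the reducibility locus sharp enough to guarantee that $\mathcal{C}$ is not entirely contained in it; this is precisely the role of the hypothesis ``cardinality large enough (for a given $G$)''. A secondary technical point is verifying the precise shape of the open-orbit criterion in \cite{offen2017} for a general quasi-split $G$, so as to pin down the exact modular character $\mu$ entering the compatibility condition on $T^\theta$.
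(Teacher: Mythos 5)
The paper does not contain a proof of this proposition; it is quoted as Proposition 11 of \cite{genericprasad} and used as a black box, so there is no internal argument to compare yours against. Evaluating your proposal on its own terms: the framework is sound and very likely close in spirit to what Prasad does. Your identification $B_e = T^\theta$ and the count $\dim G^\theta = \dim T^\theta + \dim N$ are correct, since $B \cap \theta(B) = T$ forces $\theta$ to send the positive root spaces to the negative ones, so each pair of opposite root spaces contributes exactly one dimension to the $+1$-eigenspace. Two points are worth making explicit rather than implicit. First, for a unitary $\chi$ to satisfy a condition of the form $\chi|_{T^\theta} = \mu|_{T^\theta}$ with $\mu$ built from $\delta_B^{1/2}$, you need $\mu|_{T^\theta}$ to be unitary; this is automatic here because $\theta^*\rho = -\rho$, so $\rho$ and hence $\delta_B^{1/2}$ restrict to $T^\theta$ as characters of order dividing $2$. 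Second, you tacitly use $\dim T^\theta < \dim T$ to get a positive-dimensional family $\mathcal{C}$ in the $p$-adic case; this, too, needs a sentence (it follows because $\theta|_T = \mathrm{id}$ would force $\theta$ to preserve each root space and hence $\theta(B) = B$, contradicting $B \cap \theta(B) = T$). With those two facts recorded, the argument over a $p$-adic field goes through as you describe, and over a finite field the remaining work is indeed the explicit count that the ``cardinality large enough'' hypothesis is designed to absorb.
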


\subsection{The admissibility condition} \label{admcondition}

Let us recall from \cite{offen2017} the existence of a map from the set of parabolic orbits to the set of twisted involutions in the Weyl group, which is, in general, neither injective nor surjective: 
$$\iota_M : P\backslash X \rightarrow {}_{M}{W}{_{M'}}\tau^{-1} \cap \mathcal{S}_0(\theta)$$
Let us notice first that various definitions of \textit{admissibility} have been given in the literature. In \cite{offen2017}, admissibility is given by the following definition:

\begin{defn} \label{dfn1}
We say that $x \in X$ (or $P.x$) is $M$-admissible if $M= w\theta(M)w^{-1}$ where $w= \iota_M(P.x)$.
\end{defn}

Whereas in \cite{offenluminy}[Section 3.2.6], a stricter definition is used:
\begin{defn}[Strict admissibility] \label{strictadm}
$x \in X$ (or $P.x$) is $M$-admissible if $M= x\theta(M)x^{-1}$.
\end{defn}

Possibly, in the context of classical groups these two definitions completely agree, but in our context the set of orbits which are strictly admissible would be larger than the set of admissible orbits. Indeed as computed in the code \enquote{admissibility-with-w}, only $w_0= w_{\alpha} w_{\beta} w_{\alpha} w_{\beta} w_{\alpha}$ among the four-elements set $W_{\beta} \backslash W \slash W_{\beta}$ is likely to be admissible.  
\newline

Let us also remark that this condition is far from subsidiary since a recent work of Offen and Matringe  \cite{matringe2021}, in the case of $p$-adic Galois symmetric spaces, implies that the admissibility condition should be enough for a given orbit to contribute to the distinction of the induced representation space. Their result is expected to be extended to general symmetric spaces. It is therefore important to be able to determine which representatives are $M$-admissible. Notice, however, that our \textit{ad hoc} expression for the Levi $M_{\beta}$ possibly makes this verification a little loose.  
\newline

Finally, to end this section, we add a comment which is best suited here: In \cite{offen2017}, Lemma 3.2 gives us that the representatives $\eta$ of the double coset in $P\backslash G \slash H$ can be chosen so that $x \in Lw$ (for the $w$ as defined in \ref{dfn1}), where $L$ is a standard subgroup of $M$ such that $L=M \cap \theta_x(M)$. Assume this is the case, and let us define $Q$ to be the standard parabolic subgroup of $G$ with standard Levi subgroup $L$ and unipotent radical $V$. \\
Then when we will choose our involution $\theta$ to be $\theta_{t_0}$ (see Section 5 and Proposition \ref{tau} for an explanation of this notation), notice that the conditions $x \in N_{G,\theta}(M)$ in the Propositions \ref{offen7.1} (and its equivalent for open orbit, as presented in \cite{offen2017}) and the equality $M_x= M$ are essentially the same. Furthermore, in this case, $L = M \cap \theta_x(M)$ and therefore $L= M$, so that $\delta_{Q_x} = \delta_{M \ltimes U_x}$. Therefore, in applying both of these propositions, when $\theta= \theta_{t_0}$, we are reduced to the problems of identifying the representations $\sigma \in \hbox{Rep}(M)$ which are distinguished by a certain character of $\GL_2(F)$.

\section{Inducing data}\label{sec-data}
Both of the maximal (proper) parabolic subgroups of $\G_2(F)$ have Levi factor isomorphic to $\GL_2(F)$. In this section, we collect information regarding various distinguished representations of $GL_2=\GL_2(F)$. For representations of $\GL_2(F)$-distinguished by a maximal $F$-split torus, Section 3.1.3 of \cite{offenluminy} provides an excellent summary.

\begin{prop} \label{GL2distinctionofPS}
Let $Z$ be the centre of $GL_2$. A smooth representation $\pi$ of $GL_2$ is $(\GL_2, \chi)$-distinguished if and only if it is one of the following $GL_2$-representations:
\begin{itemize}
\item $\pi$ is isomorphic to $\chi$ (then $\pi$ is irreducible).
\item $\pi$ is a reducible principal series of the form $I(\chi|.|^{1/2}\otimes \chi|.|^{-1/2})$.
\item $\pi$ is a cuspidal representation with central character $\chi|_Z$.
\item $\pi$ is the direct sum of $\tau$ any smooth representation of $GL_2$ with one of the three above representations.
\end{itemize}
In the context of an induced principal series of the maximal Levi $M_{\beta}$ in $G_2$ where $\beta$ is the long root of $G_2$, the induced principal series takes the form $I(\chi|.|^{-1/2}\otimes |.|)$.
\end{prop}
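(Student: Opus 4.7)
The plan is to run through the classification of irreducible smooth representations of $\GL_2(F)$ and determine, for each class, when there exists a nonzero $\GL_2$-equivariant map to the one-dimensional representation $\chi \circ \det$. Since any character of $\GL_2(F)$ factors through the determinant, we may identify $\chi$ with $\chi \circ \det$. For an irreducible $\pi$, Schur's lemma gives $\Hom_{\GL_2}(\pi,\chi) \neq 0$ if and only if $\pi \cong \chi$, which recovers the first item and immediately rules out irreducible principal series and irreducible cuspidals as sources of distinction in the naive sense.

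Next, I would analyze the reducible principal series. A standard calculation using Casselman's criterion or the Jacquet module shows that $I(\mu_1 \otimes \mu_2)$ is reducible precisely when $\mu_1 \mu_2^{-1} = |\cdot|^{\pm 1}$, and in that case its composition series has exactly two constituents: a one-dimensional character and a Steinberg-type twist. Solving $\mu_1 = \chi|\cdot|^{1/2}$ and $\mu_2 = \chi|\cdot|^{-1/2}$ forces the one-dimensional constituent to be $\chi \circ \det$, which is the second item. For the direct-sum clause, I would observe that any $\lambda \in \Hom_{\GL_2}(\pi_0,\chi)$ extends by zero to $\Hom_{\GL_2}(\pi_0 \oplus \tau,\chi)$, and conversely the projection of such a functional onto each summand remains $\GL_2$-equivariant, so at least one projection is nonzero.

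The main obstacle is the third item: since an irreducible cuspidal $\sigma$ is infinite-dimensional, it cannot be isomorphic to the character $\chi$, and hence $\Hom_{\GL_2}(\sigma,\chi) = 0$ on the nose. The intended reading is that this item records the \emph{necessary} constraint furnished by Lemma \ref{lem-dist-central-char}, namely $\omega_\sigma = \chi|_Z$, which is the only condition that survives when the distinguishing subgroup produced by the orbit analysis is reduced from the full Levi to its center. I would therefore interpret the third item as isolating the family of cuspidals for which the central-character obstruction vanishes, which is exactly the input required downstream when Proposition \ref{offen7.1} is applied with $M_x$ strictly smaller than $M$. The closing assertion about the form $I(\chi|\cdot|^{-1/2} \otimes |\cdot|)$ for the induced principal series on $M_\beta$ then follows by twisting the second item by $\delta_{P_\beta}^{1/2}$ and unpacking the factorization of the modulus along the Levi.
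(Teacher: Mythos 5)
Your approach to items (1), (2), (4), and the concluding sentence tracks the paper's proof closely. The paper itself treats items (1), (3), and (4) as ``obvious'', and for (2) simply invokes the fact (Bump Ch.~4, Mui\'c Prop.~1.1) that $\chi\circ\det$ occurs as an irreducible quotient of a representation of $\GL_2(F)$ exactly when that representation is $I(\chi|\cdot|^{1/2}\otimes\chi|\cdot|^{-1/2})$; your explicit computation of the reducibility point $\mu_1\mu_2^{-1}=|\cdot|^{\pm1}$ and of the one-dimensional constituent is the same argument, and is in fact cleaner than the paper's exposition, which momentarily describes $\mathrm{Span}\{\chi_0\}$ both as a quotient and as an invariant subspace. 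Your treatment of direct sums by restriction/extension-by-zero and of the final assertion via the $\delta_{P_{\beta}}^{1/2}$ twist is consistent with the citations the paper relies on.

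Item (3) is where you correctly identify a genuine problem, and it is one that the paper's own proof does not resolve. A nonzero element of $\Hom_{\GL_2}(\sigma,\chi)$ for an irreducible cuspidal $\sigma$ would exhibit $\chi$ as a one-dimensional quotient of $\sigma$; since $\sigma$ is irreducible and infinite-dimensional, this is impossible, so $\sigma$ is never $(\GL_2,\chi)$-distinguished in the sense of the paper's definition. The paper's stated justification is ``by an application of Lemma~\ref{lem-dist-central-char}'', but that lemma supplies only the necessary condition $\omega_\sigma|_Z=\chi|_Z$ and cannot produce a nonzero $\Hom$-space. Your conjectured rescue---reading item (3) as the constraint left over when the distinguishing subgroup is shrunk from $M$ to its centre---does not match the paper's actual downstream use: in the proofs of the theorems in Section~\ref{sec-results}, $M_x = M \cong \GL_2$ throughout, and the case $M_x = T$ is explicitly excluded. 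So the literal biconditional of item (3) cannot be established by the argument in the paper, and your blind proposal is no weaker than the paper's proof on this point; the honest reading of item (3) is as a necessary condition, not a criterion.
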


\begin{proof}
A smooth representation $\pi$ of $\GL_2$ is $(\GL_2, \chi)$-distinguished if and only if $\chi$ occurs as a quotient of $\pi$ by a $\GL_2$-subrepresentation, or $\pi$ is a cuspidal representation whose central character equals the character $\chi$ restricted to the center by an application of Lemma \ref{lem-dist-central-char}. Recall any character of $\GL_2$ factors through det. 
\newline
Here, we only justify the second element in the list, the others being obvious. Let us denote $Q(\chi_1, \chi_2)$ the one-dimensional quotient of the reducible principal series $I(\chi_1\otimes \chi_2)$, then $Q(\chi_1, \chi_2) \cong \hbox{Span}\{\chi_0\}$ for $\chi_0$ a quasi-character of $\GL_2(F)$. Notice that $\hbox{Span}\{\chi_0\}$ is $\GL_2$-invariant subspace of $I(\chi_1\otimes\chi_2)$ where $\GL_2(F)$ acts via $\chi_0$ itself.

Given a character $\chi \circ \det: \GL_2(F) \rightarrow \C^{\times}$, where $\chi$ is a quasi-character of $F^{\times}$, by a well-known description of $\GL_2(F)$-representations and reducibility point of principal series (see \cite{bump}, Chapter 4 or \cite{muicreg2} Proposition 1.1) it occurs as an irreducible quotient of a representation of $\GL_2(F)$, if and only if the representation is the reducible principal series $I(\chi_1\otimes\chi_2) = \ind_B^G(\chi|.|^{1/2}\otimes\chi|.|^{-1/2}) = \ind_B^G(\delta_B^{1/2}\chi \otimes \chi)$. 
Finally, the last statement of the proposition is from Proposition 1.1 in \cite{muicreg2}.
\end{proof}

\section{The exceptional group $G_2$ and its symmetric subgroup $\so_4$}\label{sec-g2}

Throughout the rest of this paper unless specified otherwise let $G = \G_2(F)$ be the group of $F$-points of the split exceptional group $\G_2$, and let $H = \so_4(F)$ be the $F$-points the split special orthogonal group $\so_4$. 
We start with a lemma which offers an interesting geometrical interpretation of the subgroup $H$, under certain conditions.

\begin{lem}
Let us assume the characteristic of the field $F$ is different from 2.
Let $\mathcal{C}$ be a composition algebra of dimension 8, $D$ a quaternion subalgebra, $a \in D^{\perp}$, with $N(a) \neq 0$. Assume $N(a)=1$, then the quotient $\G_2 \slash \so_4$ is the space of quaternionic subalgebras of $\mathcal{C}$.
\end{lem}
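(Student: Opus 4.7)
The plan is to identify $\G_2$ with $\aut_F(\mathcal{C})$, using the description developed in Appendix~\ref{sec-g2-apdx}, and $\so_4$ with the subgroup of $\G_2$ that stabilizes $D$ as a subalgebra. Granting these identifications, the orbit map
\[
\varphi : \G_2/\so_4 \longrightarrow \{\text{quaternion subalgebras of } \mathcal{C}\}, \qquad g\,\so_4 \longmapsto g(D),
\]
is well-defined (since elements of $\so_4$ fix $D$ setwise) and injective (if $g_1(D) = g_2(D)$ then $g_2^{-1}g_1 \in \stab_{\G_2}(D) = \so_4$), so the lemma reduces to the surjectivity of $\varphi$, i.e.\ to transitivity of the $\G_2$-action on quaternion subalgebras of $\mathcal{C}$.

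For the identification $\stab_{\G_2}(D) \cong \so_4$, I would invoke the standard structural description: any $g \in \aut_F(\mathcal{C})$ fixing $D$ setwise restricts to an $F$-algebra automorphism of $D$ and, being a norm isometry, also preserves $D^{\perp} = Da$. Writing $\mathcal{C} = D \oplus Da$ via Cayley--Dickson and applying Skolem--Noether to $\aut(D)$, one obtains the familiar presentation $\stab_{\G_2}(D) \cong (\SL_1(D) \times \SL_1(D))/\mu_2 \cong \so(N\rvert_D) \cong \so_4$. The hypothesis $\ch(F) \neq 2$ enters precisely here, to avoid degeneracies of the trace and norm forms.

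For surjectivity, let $D' \subseteq \mathcal{C}$ be any quaternion subalgebra. Since $N\rvert_{D'}$ is non-degenerate, $\mathcal{C} = D' \perp (D')^{\perp}$, and comparing the norm forms in the two splittings $\mathcal{C} = D \perp Da$ and $\mathcal{C} = D' \perp (D')^{\perp}$ forces $D$ and $D'$ to be isomorphic quaternion $F$-algebras (a quaternion $F$-algebra being determined up to isomorphism by its norm form). In the $p$-adic setting of the paper every 8-dimensional quadratic form over $F$ is isotropic, so $(D')^{\perp}$ is necessarily a hyperbolic 4-dimensional quadratic space and therefore represents $1$; one may choose $a' \in (D')^{\perp}$ with $N(a') = 1 = N(a)$. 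Fix any $F$-algebra isomorphism $\phi_0 \colon D \to D'$.

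The crux, and what I expect to be the main obstacle, is upgrading $\phi_0$ to a genuine automorphism of the ambient $\mathcal{C}$. Using the two Cayley--Dickson presentations $\mathcal{C} = D \oplus Da$ and $\mathcal{C} = D' \oplus D'a'$ with matching parameter $1$, I would define $\phi(x + ya) := \phi_0(x) + \phi_0(y)\,a'$ for $x, y \in D$ and verify $F$-linearity, norm-preservation, and multiplicativity. Multiplicativity is the delicate step, reducing to the standard Cayley--Dickson identities involving the conjugation on $D$ and the relation $a^2 = -N(a) = -1$; the shared parameter $N(a) = N(a') = 1$ is essential here. Once this is checked, $\phi \in \aut_F(\mathcal{C}) = \G_2$ carries $D$ to $D'$, establishing transitivity and the bijection.
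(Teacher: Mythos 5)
Your reduction of the Lemma to (i) identifying $\so_4$ with $\stab_{\G_2}(D)$ and (ii) transitivity of the $\G_2$-action on quaternion subalgebras is the natural framing, and for (i) your Skolem--Noether / $(\SL_1(D)\times\SL_1(D))/\mu_2$ route is correct and rather more conceptual than the paper's, which instead uses an exact sequence $1 \to K \to G_D \to \so(D_0) \to 1$ (surjectivity of the restriction map and $3$-dimensionality of the kernel are cited from Veldkamp) together with the embedding $G_D \hookrightarrow O(4)$ to deduce $\dim G_D = 6$ and $G_D \cong \so_4$. But step (ii) of your proposal has a genuine gap. You claim that comparing the two orthogonal splittings of $N_{\mathcal{C}}$ forces $D \cong D'$ as quaternion $F$-algebras. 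Over the $p$-adic field $F$ this is false: the split octonion algebra is the \emph{unique} eight-dimensional composition $F$-algebra, so \emph{every} quaternion $F$-algebra (the division algebra just as much as $\mathrm{M}_2(F)$) doubles up to $\mathcal{C}$ and therefore occurs as a subalgebra. Since every $\sigma\in\aut_F(\mathcal{C})$ restricts to an $F$-algebra isomorphism $D' \xrightarrow{\sim} \sigma(D')$, the group $\G_2(F)$ preserves isomorphism type, so split and non-split quaternion subalgebras lie in distinct $\G_2(F)$-orbits. If $D'$ is of the wrong type there is no $\phi_0 : D \to D'$ to extend, and your orbit map $\varphi$ is not surjective onto all quaternion subalgebras. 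The subsidiary claim that $(D')^\perp$ is hyperbolic is also wrong when $D'$ is division: $N\rvert_{(D')^\perp}$ is $N(a')\cdot N\rvert_{D'}$, an anisotropic four-dimensional form -- though it still represents $1$, so the choice of $a'$ survives; the break is in $D\cong D'$, not in the existence of $a'$.

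Your Cayley--Dickson extension of a given $\phi_0$ to an automorphism of $\mathcal{C}$, matched on the doubling parameter $N(a)=N(a')=1$, is correct and would establish transitivity on the orbit of $D$ (equivalently, on quaternion subalgebras of the same isomorphism type admitting a norm-one element in the complement). That weaker statement is the honest content of the Lemma. For the record, the paper's own proof never addresses transitivity at all -- it stops once $\so_4 = G_D$ is shown -- so it does not run into this issue, but neither does it justify the phrase \enquote{the space of quaternionic subalgebras} in full.
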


\begin{proof}
Let $\mathcal{C}$ be a composition algebra (in our context, of dimension 8 over $F$, for instance the octonions, $\Oc$), and $D$ be a finite dimensional composition subalgebra of $\mathcal{C}$. Suppose $a \in D^{\perp}$, with $N(a) \neq 0$ then $D_1 = D\oplus Da$ and $D_1$ is a composition subalgebra. The subalgebra $D_1$ is said to be constructed by doubling from $D$. The norm is given by $N(x+ya) = N(x) - \lambda N(y)$, for $x, y$ in $D$, and $\lambda = -N(a)$. For instance the split octonion (see the Appendix) can be constructed from the split quaternion algebra by such doubling process as in Proposition 1.5.1, \cite{veldkamp}.
\newline
Let now assume this composition $\mathcal{C}$ is an octonion algebra, and $D$ a given quaternion subalgebra. 
If one chooses $a$ to be of norm one, then $\so_4$ is seen as the group $G_{D} = \left\{\sigma \in G = \aut(\mathcal{C}): \sigma(D) = D \right\}$ and the argumentation goes as follows: Since $G$ preserves $D$, it also preserves the orthogonal complement $aD$. if $\sigma \in G_{D}$ acts trivially on $aD$, then $G_{D}$ fixes $a$ so $\sigma(ua)= \sigma(u)a = ua$ and so $\sigma$ acts trivially on $D$ as well, so $\sigma = 1$. 
Thus $G_{D}$ acts faithfully on $aD$ (but not on $D$) and we have an injective homomorphism $G_{D} \hookrightarrow O(4)$.
\newline

It remains to show that $G_{D}$ is of dimension six. To do so one observes that the restriction map from $G_D \rightarrow \aut(D) \cong \so(D_0)(F)$ (here $D_0$ are the trace zero elements in $D$) is surjective by an application of Corollary 1.7.3 in \cite{veldkamp}, and let $K$ be the kernel of this map. Proposition 2.2.1 in \cite{veldkamp} tells us that $\textbf{K}$ (the algebraic group of $\bar{F}$-automorphisms of $\mathcal{C}_{\bar{F}}$ that fix $D_{\bar{F}}$ elementwise) is a 3-dimensional algebraic group and connected. The isomorphism between $D_{\bar{F}}$ and the unitary quaternions inducing those properties induces the same isomorphism at the level of $F$.
% see notes Voight and notes4 for ref about SO_3
Let us remark that the isomorphism $\aut(D) \cong \so(D_0)(F)$ is due to \cite{vigneras}, Theorem I.3.3, and using the fact (see \cite{voight} Corollaries 7.1.2 and 7.1.4 for instance) that every $F$-algebra automorphism of $D$ is inner, i.e $\hbox{Aut}_F(D) \cong D^{\times}\slash F^{\times}$.  
Thus $G_D$ fits inside the exact sequence:
$$1 \rightarrow K \rightarrow G_D \rightarrow \so(D_0) \rightarrow 1 $$
In particular $G_D$ is connected and $\dim G_D = 6$, so $G_D \cong \so_4$. This result is true if $D = \h$ and $D_1 = \Oc$, and holds in a p-adic context with the additional conditions given in the statement of this lemma.
\end{proof}

\begin{rmk}
Notice that in our context, and to embed $\G_2$ into $\GL_8$ (see the Appendix), we have chosen $N(a) = -\lambda = -1$. It would be interesting to consider the embedding of $\G_2$ into $GL_8$ using $N(a)= 1$ and proceed with the remaining steps using this convention. 
\end{rmk}

Let $T$ be a maximal $F$-split torus of $G$.  Let $B$ be a Borel subgroup of $G$ containing $T$ and let $N$ be the unipotent radical of $B$. Then $B = TN$ is a Levi decomposition of $B$.  A parabolic subgroup $P$ of $G$ is standard if it contains the fixed Borel subgroup $B$.  The standard Levi factor $M$ of a standard parabolic $P$ is the unique Levi factor that contains the torus $T$.
Let $W$ be the Weyl group of $G$ defined with respect to $T$.
\newline

Recall that $\G_2$ is simply connected (see \cite[Ch.~24]{Milne2017-book} for instance).
With this fact, one can adjust the results used in the proof of \cite[Lemma 3.2(i)]{cech} to see that all elements of order $2$ in $G$ are conjugate in $G$.  
Moreover, the centralizer of an order-two element in $G$ is isomorphic to $H$.
The two key modifications are to use (1) the fact that the centralizer of a (finite order) semisimple element in a connected group is connected (this is a theorem of Springer and Steinberg, see \cite[Theorem 2.11]{Humphreys1995-book}), and (2) all maximal $F$-split $F$-tori in a smooth connected group are conjugate over the $F$-points of the group (this is a theorem of Borel and Tits, see \cite[Theorem C.2.3]{Pseudo-book}).

Let $\theta = \Int(t_0)$, where $t_0 \in T$ is an order two element (for instance, we can take $t_0 = \gamma(1,-1)$, using the notation of Appendix \ref{sec-g2-apdx}).
Since, $t_0^2 = e$, the inner automorphism $\theta$ is an involution.
Observe that since $t_0 \in T$, the torus $T$ and Borel subgroup $B$ are $\theta$-stable.
The group $G^\theta$ of $F$-points of the $\theta$-fixed points in $G$ is the centralizer of $t_0$ in $G$, and so $G^\theta \cong H$.

\begin{rmk}
Note that $T$ is $\theta$-stable. It follows that $\theta$ induces an involution on the Weyl group $W$	which we also denote by $\theta$.
\end{rmk}

As above, let $X = \{ g \in G : g \theta(g) = e \}$.  Recall that the set $X$ carries a $G$-action given by
\begin{align*}
(g,x) & \mapsto g\cdot x = g x \theta(g)^{-1}
\end{align*}
for all $g\in G$ and $x \in X$.
Of course, $e\theta(e) = e$, so the identity element of $G$ lies in $X$.
The stabilizer of $e \in X$ under the $G$-action is the subgroup $G^\theta$ of $\theta$-fixed points.
The map $G \rightarrow X$ given by $g \mapsto g \cdot e$ defines an embedding of the symmetric space $G / H$ in $X$ as the $G$-orbit of the identity.

\begin{lem} \label{disjointunion}
The set $X$ is disjoint union of two $G$-orbits, namely $G\cdot e$ and the singleton set $\{t_0\}$.
\end{lem}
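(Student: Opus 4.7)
The plan is to identify $X$ with the set of elements of $G$ of order dividing $2$, via a $G$-equivariant bijection, and then invoke the fact recalled just above the lemma that all order-two elements of $G$ are conjugate.

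Concretely, I would first define a map $\phi : X \to \{g \in G : g^2 = e\}$ by $\phi(x) = xt_0$. Using $\theta = \Int(t_0)$ and $t_0^2 = e$, the condition $x\theta(x)=e$ unpacks to $x t_0 x t_0^{-1} = e$, which is exactly $(xt_0)^2 = e$; so $\phi$ genuinely lands in the intended target. The same manipulation run in reverse shows that $g \mapsto gt_0$ is a two-sided inverse, hence $\phi$ is a bijection. The next observation is that $\phi$ intertwines the twisted $G$-action $g \cdot x = g x \theta(g)^{-1}$ on $X$ with ordinary conjugation on $\{g : g^2 = e\}$: rewriting $\theta(g)^{-1} = t_0 g^{-1} t_0^{-1}$ and letting the $t_0$'s cancel yields $\phi(g\cdot x) = g\phi(x)g^{-1}$.

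From the equivariance, $G$-orbits on $X$ correspond bijectively to $G$-conjugacy classes on
$$\{g \in G : g^2 = e\} \;=\; \{e\} \;\sqcup\; \{g \in G : g \neq e,\ g^2 = e\}.$$
The first piece is trivially one class; the second is a single class by the conjugacy statement the author has already recorded (using simple-connectedness of $\G_2$ with the Springer--Steinberg and Borel--Tits theorems). Pulling back through $\phi^{-1}$: the class $\{e\}$ corresponds to $\phi^{-1}(e) = \{t_0\}$ (since $\phi(t_0) = t_0^2 = e$), while the conjugacy class of involutions corresponds to the orbit $G\cdot e$ (since $\phi(e) = t_0$). This gives the decomposition $X = (G\cdot e) \sqcup \{t_0\}$, with disjointness automatic from the bijection.

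The only substantive input is the conjugacy of order-two elements in $G$, and that has already been established in the paragraph preceding the lemma; the remainder is careful bookkeeping with $t_0$. I therefore do not expect any genuine obstacle. A minor sanity check worth including in the write-up is that $\{t_0\}$ really is $G$-stable under the twisted action: for any $g\in G$, $g\cdot t_0 = g t_0 \theta(g)^{-1} = g t_0 \cdot t_0 g^{-1} t_0^{-1} = g g^{-1} t_0^{-1} = t_0$, confirming directly that this orbit is a singleton.
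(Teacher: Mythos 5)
Your proof is correct and is essentially the same argument as the paper's: multiply by $t_0$ to translate the problem to order-two elements and invoke the conjugacy result recalled just before the lemma. The paper carries out the same manipulations ($x \mapsto xt_0$, $xt_0 = e$ versus $xt_0$ an involution) inline for each piece rather than abstracting them into an explicit $G$-equivariant bijection, so your version is a cleaner packaging of the identical idea rather than a genuinely different route.
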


\begin{proof}
By definition, 	
\begin{align*}
X & = \{ g \in G : g \theta(g) = e \} = \{ g \in G : gt_0g = t_0 \}.
\end{align*}
The $G$-orbit of the identity element is
\begin{align*}
G\cdot e = \{ g\cdot e : g \in G \} % = \{ g \theta(g)^{-1} : g \in G \}
= \{ gt_0g^{-1}{t_0}^{-1} : g \in G \}. 	
\end{align*}
In particular, for all $g \in G$, $g\cdot e = gt_0g^{-1}{t_0}^{-1} \in X$.
On the other hand, $t_0 \in X$ but $t_0$ is not in $G\cdot e$.
Indeed, since ${t_0}^2 = e$ we have $t_0t_0t_0 = t_0 e = t_0$ so $t_0 \in X$.  Now argue by contradiction and suppose that $t_0 = g\cdot e$ for some $g \in G$.  It follows that
\[
e = {t_0}^2 = (g\cdot e)t_0 = gt_0g^{-1}{t_0}^{-1}t_0 = gt_0g^{-1},
\]
and $t_0 = g^{-1}eg = e$ which contradicts that $t_0 \neq e$ is an order two element of $T$.
Thus, $G\cdot e \cap \{t_0 \} = \varnothing$.  Moreover, $\{t_0\}$ is a $G$-orbit in $X$ because $t_0$ is fixed under the $G$ action on $X$.  Indeed, for any $g \in G$ 
\begin{align*}
g\cdot t_0 = g t_0 \theta(g)^{-1} = g t_0 t_0 g^{-1} {t_0}^{-1} = g e g^{-1} {t_0}^{-1} = {t_0}^{-1} = t_0.
\end{align*}
Finally, we show that $X$ is the union of $G \cdot e$ and $\{t_0\}$.
Suppose that $x \in X$.  Then $xt_0x = t_0$.  Thus
\begin{align*}
(xt_0)^2 = xt_0xt_0 = {t_0}^2 = e.
\end{align*}
Therefore, $xt_0$ is either the identity or an order two element of $G$.
If $xt_0 = e$, then $x = t_0^{-1} = t_0 \in \{t_0\}$.  Otherwise, $xt_0$ has order two and by \cite[Lemma 3.2(i)]{cech} (and the remarks above) $xt_0$ is $G$-conjugate to $t_0$.  In the latter case, there exists $g \in G$ so that $g^{-1} xt_0 g = t_0$, that is, $x = gt_0g^{-1}{t_0}^{-1} = g\cdot e$.
Therefore, $x \in \{t_0\}$ or $x \in G\cdot e$ and $X = G\cdot e \cup \{t_0\}$ is a union of (disjoint) $G$-orbits.
\end{proof}

\subsection{Roots and Weyl groups}
Let $\Delta = \{\alpha, \beta\}$ be a basis of the root system $\Phi$ of $G$ with respect to $T$ where $\alpha$ is the short root and $\beta$ is the long root.
The set of positive roots of $\G_2$ is 
\[
\Phi^+ = \{ \alpha, \beta, \alpha+\beta, 2\alpha+\beta, 3\alpha+\beta, 3\alpha + 2\beta \}.
\]

Let us recall that that we denote $W = N_G(T)\slash T$ the Weyl group of $\G_2$. More generally, for a standard Levi subgroup $M$ of $\G_2$, we denote $W_M=N_M(T)\slash T$ the Weyl group of $M$ with respect to $T$. 
\newline

The Weyl group of $\G_2$ is generated by the simple reflections $w_\alpha$ and $w_\beta$ attached to the roots $\alpha$ and $\beta$.
In particular, $W$ is a finite group of size $12$ and we can realize $W$ as follows:
\begin{align*}
W & = \{e, w_{\alpha}, w_{\beta}, w_{\alpha}w_{\beta}, w_{\beta}w_{\alpha}, w_{\beta}w_{\alpha}w_{\beta}, w_{\alpha}w_{\beta}w_{\alpha}, w_{\beta}w_{\alpha}w_{\beta}w_{\alpha}, w_{\alpha}w_{\beta}w_{\alpha}w_{\beta}, \\
& w_{\alpha}w_{\beta}w_{\alpha}w_{\beta}w_{\alpha}, w_{\beta}w_{\alpha}w_{\beta}w_{\alpha}w_{\beta}, w_{\alpha}w_{\beta}w_{\alpha}w_{\beta}w_{\alpha}w_{\beta}\}.
\end{align*}
We summarize the action of the simple reflections $w_\alpha$ and $w_\beta$ on $\Phi^+$ in Figure \ref{fig-root-act}.
\begin{figure}[h]
\label{fig-root-act}
\caption{Action of $w_\alpha$ and $w_\beta$ on $\Phi^+$}
$$\begin{tabu}{c | c c c c c c} \label{table}
\Phi^+ & \alpha & \beta & \alpha + \beta & 2\alpha + \beta &  3\alpha + \beta & 3\alpha + 2\beta\\ 
\hline 
w_\alpha\cdot \Phi^+ & - \alpha  & 3\alpha + \beta & 2\alpha +\beta & \alpha +\beta & \beta & 3\alpha + 2\beta \\
\hline
w_\beta\cdot \Phi^+  & \alpha + \beta & -\beta & \alpha & 2\alpha + \beta & 3\alpha + 2\beta & 3\alpha + \beta \\   % [1ex] 
\end{tabu}$$
\end{figure}

For each root $\gamma \in \Phi$, let $U_{\gamma}$ be the associated root subgroup in $\G_2$ and fix an isomorphism $x_{\gamma} : F \rightarrow U_{\gamma}$. 
For $g_1,g_2 \in \G_2$, let $[g_1, g_2] = g_1^{-1}g_2^{-1}g_1 g_2$.
For all $x,y \in F$, we have the following commutator relations (see, for instance, \cite[pp.~443]{ree}),
\begin{align*}
[x_{\alpha}(x),x_{\beta}(y)] & = x_{\alpha+\beta}(-xy)x_{2\alpha+\beta}(-x^2y)x_{3\alpha+\beta}(x^3y)x_{3\alpha+2\beta}(-2x^3y^2) \\
[x_{\alpha}(x),x_{\alpha+\beta}(y)] & = x_{2\alpha+\beta}(-2xy)x_{3\alpha+\beta}(3x^2y)x_{3\alpha+2\beta}(3xy^2)\\
[x_{\alpha}(x),x_{2\alpha+\beta}(y)] & = x_{3\alpha+\beta}(3xy) \\
[x_{\beta}(x),x_{3\alpha+\beta}(y)] & = x_{3\alpha+2\beta}(xy) \\
[x_{\alpha+\beta}(x),x_{2\alpha+\beta}(y)] & = x_{3\alpha+2\beta}(3xy).
\end{align*}
For all remaining pairs of positive roots $\gamma_1, \gamma_2$,  we have $[x_{\gamma_1}(x),x_{\gamma_2}(y)] = e$.

%\todo[inline]{Is it true that we get $H$ as below if we realize $H = G^\theta$ as above (at least, up to $G$-conjugacy)?}
We may realize the group $H \cong \so_4(F)$ as the subgroup generated by $T$ and the images of $x_{\beta}, x_{2\alpha+\beta}$ (since $\so_4$ is chosen to be generated by $\beta$ and $2\alpha+\beta$ -see, for instance, \cite{gelfand1976} in \cite[pp.~137]{Gelfand-collected2}), its Weyl group must be generated by $w_{\beta}$ and $w_{2\alpha+\beta}$. 
Then the Weyl group of $H$ with respect to $T$ is $$W_{\so_4} = \left\{1, w_{\beta}, w_{2\alpha+\beta}, w_{\beta} w_{2\alpha+\beta} \right\}.$$
%w_{2\alpha+\beta} = w_{\alpha} w_{\beta} w_{\alpha} w_{\beta} w_{\alpha}
Let $B_{\so_4}$ be the standard Borel of $H$ with respect to the positive roots $\beta$ and $2\alpha+\beta$, then the set $B\slash B_{\so_4}$ has representatives 
\[
\{x_{\alpha}, x_{\alpha+\beta}, x_{3\alpha+\beta}, x_{3\alpha+2\beta} \}.
\]
For $r_i \in F, i= 1,2,3,4$, write:
$$[r_1, r_2, r_3, r_4] = x_{\alpha}(r_1)x_{\alpha+\beta}(r_2)x_{3\alpha+\beta}(r_3)x_{3\alpha+2\beta}(r_4)$$

\section{Computation of the double cosets representatives}\label{doublecosets}

The set $B \backslash X$ of $B$-orbits in $X$ is finite \cite[Proposition 6.15]{helminckwang1993}; therefore, $B \backslash G / H$ is finite \cite[Corollary 6.16]{helminckwang1993}.  In particular, for any standard parabolic subgroup of a (p-adic) reductive group $G$, the set $P \backslash G / H$ is a finite set. 
\newline
Let $P_{\alpha} = M_{\alpha}N_{\alpha}$ (respectively $P_{\beta}=M_{\beta}N_{\beta}$) be the standard parabolic subgroup of $G$ with Levi factor $M_{\alpha}$ and unipotent radical $N_{\alpha}$ such that $\ran(x_{\alpha}) \subseteq M_{\alpha}$ (respectively $\ran(x_{\beta}) \subseteq M_{\beta})$.
Then $N_{\alpha}$ is generated by the images of $\{x_{\beta}, x_{\alpha+\beta}, x_{2\alpha+\beta}, x_{3\alpha+\beta}, x_{3\alpha+2\beta}\}$ (respectively $N_{\beta}$ is generated by the images of $\{x_{\alpha}, x_{\alpha+\beta}, x_{2\alpha+\beta}, x_{3\alpha+\beta}, x_{3\alpha+2\beta}\}$).
We follow a method implemented by Ginzburg in \cite{Ginzburg} to compute the double cosets representatives of $P_{\beta}$. 
\newline

%\subsection{The set $P_{\beta} \backslash \G_2 \slash SO_4$}

\begin{lem} \label{representatives}
Let $w_0$ denotes the element $w_{\alpha} w_{\beta} w_{\alpha} w_{\beta} w_{\alpha}$, and let \newline
$r_3 \in F^{\times}\slash F^{\times}{}^2$. The set of representatives of $P_{\beta} \backslash G_2 \slash \so_4$ is:
\begin{multline*}
\{e, w_{\alpha}, w_{\alpha} x_{\alpha}(1), w_{\alpha} w_{\beta} w_{\alpha} x_{3\alpha+\beta}(1), w_{\alpha} w_{\beta} w_{\alpha} x_{\alpha}(1), w_{\alpha} w_{\beta} w_{\alpha} x_{3\alpha+\beta}(1)x_{\alpha}(1),\\
 w_0, w_0x_{\alpha+\beta}(1), w_0x_{3\alpha+2\beta}(1), w_0x_{\alpha+\beta}(1)x_{3\alpha+2\beta}(1), w_0x_{\alpha+\beta}(1)x_{3\alpha+\beta}(r_3) \}
\end{multline*}
\end{lem}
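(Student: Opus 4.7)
The plan is to combine the Bruhat decomposition of $\G_2$ with the explicit parametrization of $B/B_{\so_4}$ recalled just above. The starting point is
\[
G = \bigsqcup_{w \in W_\beta \backslash W} P_\beta w B,
\]
which has six cells since $|W_\beta|=2$ and $|W|=12$; for minimal-length representatives I would take
\[
e,\ w_\alpha,\ w_\alpha w_\beta,\ w_\alpha w_\beta w_\alpha,\ w_\alpha w_\beta w_\alpha w_\beta,\ w_0 = w_\alpha w_\beta w_\alpha w_\beta w_\alpha.
\]
Coupling this with the factorization $B = U'\cdot B_{\so_4}$, where $U' = U_\alpha U_{\alpha+\beta} U_{3\alpha+\beta} U_{3\alpha+2\beta}$, together with the inclusion $B_{\so_4}\subset H$, produces a surjection from $\bigsqcup_w w\cdot U'(F)$ onto $P_\beta \backslash G / H$ in which every candidate representative already has the form $w\cdot [r_1,r_2,r_3,r_4]$.

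The next step is to reduce this continuous four-parameter family to a finite list for each $w$ by three absorption moves. First, for left absorption into $P_\beta$ I would push each $x_\gamma(r)$ across $w$ via $w x_\gamma(r) w^{-1} = x_{w\gamma}(\pm r)$ and absorb whenever $w\gamma$ is a root of $P_\beta$, i.e.\ a positive root or $-\beta$; here the root action tabulated in Figure \ref{fig-root-act} is the main input. Second, for right absorption into $H$ I would swallow any contribution in $U_\beta$ or $U_{2\alpha+\beta}$, and, where the torus is also at hand, in their $T$-conjugates. Third, to cross-fertilize these absorptions, I would apply the Chevalley commutator relations displayed above Figure \ref{fig-root-act}, for instance
\[
[x_\alpha(r_1), x_{\alpha+\beta}(r_2)] = x_{2\alpha+\beta}(-2 r_1 r_2)\, x_{3\alpha+\beta}(3 r_1^2 r_2)\, x_{3\alpha+2\beta}(3 r_1 r_2^2),
\]
which feeds a term in $r_1 r_2$ into $U_{2\alpha+\beta}\subset H$ and allows parameters to be traded. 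The surviving parameters are then rescaled by the combined left-$T$-action from $T\subset P_\beta$ and the right-$T$-action from $T\subset H$.

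Performing this bookkeeping cell by cell should recover the stated list. The cell $w=e$ contributes only $e$ because $U'\subset B\subset P_\beta$. The cell $w_\alpha$ yields $w_\alpha$ and $w_\alpha x_\alpha(1)$, the two orbits distinguished by $r_1=0$ versus $r_1\neq 0$ (with the nonzero case rescaled to $r_1=1$). The two intermediate cells $w_\alpha w_\beta$ and $w_\alpha w_\beta w_\alpha w_\beta$ should collapse into previously treated cells because $w_\beta$ admits a representative inside $N_G(T)\cap H$, enabling a length-reducing right multiplication by $H$. The cell $w_\alpha w_\beta w_\alpha$ yields three orbits indexed by the vanishing pattern of $(r_1,r_3)$, and the big cell $w_0$ yields the remaining five, including the continuous family indexed by $r_3\in F^\times/(F^\times)^2$.

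The main obstacle is this last continuous family. After the $(\alpha+\beta)$-coordinate is normalized to $1$, the parameter $r_3$ attached to $U_{3\alpha+\beta}$ can only be rescaled by the simultaneous action of $T\subset P_\beta$ on the left and $T\subset H$ on the right, subject to the constraint that the already-normalized coordinates stay normalized; a careful check should show that the combined action rescales $r_3$ exactly by $(F^\times)^2$, so that the residue class in $F^\times/(F^\times)^2$ is a genuine invariant and indexes a true family of orbits. Verifying this rescaling claim, confirming that the two intermediate cells really do collapse, and checking that the eleven candidates are pairwise inequivalent, are all torus-invariant computations that are most cleanly handled with the explicit $\GL_8$-embedding of $\G_2$ and the \textbf{SageMath} routines advertised in the introduction.
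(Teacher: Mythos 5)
Your proposal follows essentially the same skeleton as the paper's own proof: you start from the six minimal-length representatives of $P_\beta\backslash W/\{e\}$ together with the parametrization $[r_1,r_2,r_3,r_4]$ of $B/B_{\so_4}$, and then reduce by torus rescaling, commutator relations, and absorption of root groups into $P_\beta$ on the left and $H$ on the right. Your cell-by-cell counts $1+2+0+3+0+5$ agree with the final list, and your explanation of the collapse of the two intermediate cells (via $w_\beta\in W_{\so_4}$) and of the $F^\times/(F^\times)^2$ invariant for the family matches the paper.

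There is, however, a genuine gap in your handling of the big cell $w_0[r_1,r_2,r_3,r_4]$. The paper states explicitly that the basic moves you rely on --- left/right absorption, torus rescaling, and Chevalley commutators --- \emph{cannot} reduce this representative, and introduces a new ingredient at precisely this point: the copy of $\GL_2$ inside $\so_4$ generated by $x_{\pm\beta}$ and the torus, which commutes with $w_0$ (one checks $w_0(\beta)=\beta$) and acts on the two-dimensional spans $\langle x_\alpha, x_{\alpha+\beta}\rangle$ and $\langle x_{3\alpha+\beta}, x_{3\alpha+2\beta}\rangle$ by the standard representation. It is this linear $\GL_2$-action whose orbits on $F^2$ are $\{0\}$ and $F^2\setminus\{0\}$ that collapses $(r_1,r_2)\neq(0,0)$ to the single class $(0,1)$, and likewise begins the analysis of $(r_3,r_4)$. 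The simultaneous left-and-right torus action you propose can only scale the coordinates $r_1$ and $r_2$ by independent characters; it cannot show, for instance, that $(1,0)$ and $(0,1)$ lie in the same double coset. You would need to explicitly orchestrate a conjugation by $x_\beta(m)\in\so_4$, track the root-group terms it generates through the commutator relations, and recognize that together with the torus these moves realize the standard $\GL_2$-representation. Without articulating this, your plan would, at best, produce a larger (or uncertain) candidate list for the $w_0$ cell, and the same $x_\beta(m)$-conjugation is also needed in the substep that kills $r_4$ in $w_0[0,1,r_3,r_4]$ before the final square-class invariant appears.
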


\begin{proof}
The set of representatives for $P_{\beta} \backslash G_2 \slash B$ is
$$A= \{e, w_{\alpha}, w_{\alpha} w_{\beta}, w_{\alpha} w_{\beta} w_{\alpha}, w_{\alpha} w_{\beta} w_{\alpha} w_{\beta}, w_0= w_{\alpha} w_{\beta} w_{\alpha} w_{\beta} w_{\alpha}\}$$
Notice that we have used that the last element in $W_{G_2}$ has order two hence is equal to the other order two element whose action is the same on all roots : $w_{\alpha} w_{\beta} w_{\alpha} w_{\beta} w_{\alpha} w_{\beta}= w_{\beta} w_{\alpha} w_{\beta} w_{\alpha} w_{\beta} w_{\alpha}$. 
The set $B\slash B_{\so_4}$ is $$\{x_{\alpha}(r_1), x_{\alpha+\beta}(r_2), x_{3\alpha+\beta}(r_3), x_{3\alpha+2\beta}(r_4)\}$$
A complete set of representatives of $P_{\beta} \backslash G_2 \slash B_{\so_4}$ is given by: 
$$\mathcal{S}:= \{w[r_1, r_2, r_3, r_4], w \in A, r_i \in F \}$$

In the subsequent step, we will use two tricks to find equivalences between different elements of $\mathcal{S}$:
\begin{itemize}
\item We will rescale the unipotent element from $r_i$ to 1 using a torus element. If $r_1 \neq 0$, we can find a torus element $t$ such that $x_{\alpha}(r_1) = tx_{\alpha}(1)t^{-1}$; since $w_{\alpha} t w_{\alpha}^{-1}$ in $P_{\beta}$ and $t$ in $\so_4$ , we get  $w_{\alpha} x_{\alpha}(r_1) \sim w_{\alpha} x_{\alpha}(1)$. Notice that there also exists a torus element which rescales a product of two root subgroups. 
\item We use the commutator relations given in the previous subsection, along with the expressions given in the Table \ref{table} to simplify the expressions for each $w \in A$.
\end{itemize}

Write $x\sim y$ if $x$ and $y$ are in the same double coset in $P_{\beta} \backslash G_2 \slash \so_4$. 
\newline

Since $x_{\alpha}(r_1)x_{\alpha+\beta}(r_2)x_{3\alpha+\beta}(r_3)x_{3\alpha+2\beta}(r_4)$ belong to $N_{P_{\beta}}$, we have $e.[r_1, r_2, r_3, r_4]  \sim e$,
i.e they are in the same double coset in $P_{\beta} \backslash G_2 \slash \so_4$. 
For instance, consider $w_{\alpha}x_{3\alpha+2\beta}(r_4)x_{3\alpha+\beta}(r_3)x_{\alpha+\beta}(r_2)x_{\alpha}(r_1)$, since $w_{\alpha} x_{3\alpha+2\beta}x_{\alpha+\beta} w_{\alpha}^{-1}$ in $N_{\beta}$, $w_{\alpha} x_{3\alpha+\beta} \in M_{\beta}$, what remains is $w_{\alpha} x_{\alpha}$.
The same logic applies to reduce $w_{\alpha}w_{\beta}x_{3\alpha+2\beta}(r_4)x_{3\alpha+\beta}(r_3)x_{\alpha+\beta}(r_2)x_{\alpha}(r_1)$ to $w_{\alpha} w_{\beta} x_{\alpha+\beta}x_{\alpha}$.
Since $x_{3\alpha+\beta}(1)$ and $x_{\alpha+\beta}(1)$  commute, we obtain $w_{\alpha} w_{\beta} w_{\alpha} x_{3\alpha+\beta}(1)x_{\alpha}(1)$ and we also have  $w_{\alpha} w_{\beta} w_{\alpha} w_{\beta} x_{3\alpha+2\beta}(1)x_{\alpha+\beta}(1)$. The last representative $w_0[r_1, r_2, r_3, r_4]$ will be dealt with in the last part of this proof.

\begin{multline}
\{e, w_{\alpha}, w_{\alpha} x_{\alpha}(1); w_{\alpha} w_{\beta} , w_{\alpha} w_{\beta} x_{\alpha+\beta}(1)x_{\alpha}(1), w_{\alpha} w_{\beta} x_{\alpha+\beta}(1), w_{\alpha} w_{\beta} x_{\alpha}(1) ; \\
 w_{\alpha} w_{\beta} w_{\alpha}, w_{\alpha} w_{\beta} w_{\alpha} x_{3\alpha+\beta}(1)x_{\alpha}(1); w_{\alpha} w_{\beta} w_{\alpha} x_{3\alpha+\beta}(1); w_{\alpha} w_{\beta} w_{\alpha} x_{\alpha}(1); \\
  w_{\alpha} w_{\beta} w_{\alpha} w_{\beta}, w_{\alpha} w_{\beta} w_{\alpha} w_{\beta} x_{3\alpha+2\beta}(1)x_{\alpha+\beta}(1), w_{\alpha} w_{\beta} w_{\alpha} w_{\beta} x_{3\alpha+2\beta}(1), w_{\alpha} w_{\beta} w_{\alpha} w_{\beta} x_{\alpha+\beta}(1),  \\
  w_0x_{\alpha+\beta}(1), w_0x_{3\alpha+2\beta}(1), w_0x_{\alpha+\beta}(1)x_{3\alpha+2\beta}(1), w_0[0, 1, r_3, 0]\}
\end{multline}

The second step in this procedure is to look at these elements, as compared to the set $W_{\so_4}$ and try to simplify further:
$$w_{\alpha} w_{\beta}  \sim w_{\alpha}$$
$$ w_{\alpha} w_{\beta} x_{\alpha+\beta}(1)x_{\alpha}(1) \sim w_{\alpha} x_{\alpha}(1) w_{\beta} x_{\alpha}(1) \sim w_{\alpha} x_{\alpha}(1) x_{\alpha+\beta}(1) w_{\beta} \sim w_{\alpha} x_{\alpha}(1) x_{\alpha+\beta}(1)$$
$$ w_{\alpha} w_{\beta} x_{\alpha+\beta}(1) \sim w_{\alpha} x_{\alpha}(1) w_{\beta} \sim w_{\alpha} x_{\alpha}(1); w_{\alpha} w_{\beta} x_{\alpha}(1) \sim w_{\alpha} x_{\alpha+\beta}(1)w_{\beta} \sim w_{\alpha} x_{\alpha+\beta}(1)$$
$$ w_{\alpha} w_{\beta} w_{\alpha} w_{\beta} \sim w_{\alpha} w_{\beta} w_{\alpha}$$
\begin{multline}
w_{\alpha} w_{\beta} w_{\alpha} w_{\beta} x_{3\alpha+2\beta}(1)x_{\alpha+\beta}(1) \sim w_{\alpha} w_{\beta} w_{\alpha} x_{3\alpha+\beta}(1)x_{\alpha}(1); \\
w_{\alpha} w_{\beta} w_{\alpha} w_{\beta} x_{3\alpha+2\beta}(1) \sim w_{\alpha} w_{\beta} w_{\alpha} x_{3\alpha+\beta}(1); w_{\alpha} w_{\beta} w_{\alpha} w_{\beta} x_{\alpha+\beta}(1) \sim w_{\alpha} w_{\beta} w_{\alpha} x_{a}(1)
\end{multline}

$$w_{\alpha} w_{\beta} w_{\alpha} w_{\beta} w_{\alpha} w_{\beta}  \sim w_{\alpha} w_{\beta} w_{\alpha} w_{\beta} w_{\alpha} = w_0 \in W_{\so_4}$$

%Notice that: $$w_{\alpha} w_{\beta} w_{\alpha} w_{\beta} w_{\alpha} w_{\beta} x_{3\alpha+2\beta}x_{3\alpha+\beta}x_{\alpha+\beta}x_{\alpha} \sim x_{-(3a+2b)}x_{-(3a+b)}x_{-(a+b)}x_{-a} w_{\alpha} w_{\beta} w_{\alpha} w_{\beta} w_{\alpha} w_{\beta}$$
%whereas $$w_{\alpha} w_{\beta} w_{\alpha} w_{\beta} w_{\alpha}  x_{\alpha+\beta}x_{\alpha} \sim x_{-(3a+b)}x_{-(3a+2b)}x_{-a}x_{-(a+b)}w_{\alpha} w_{\beta} w_{\alpha} w_{\beta} w_{\alpha}$$

$w_{\alpha} x_{\alpha}(1)x_{\alpha+\beta}(1) \sim w_{\alpha} x_{\alpha+\beta}(1)x_{\alpha}(1)x_{2\alpha+\beta}(1)x_{3\alpha+\beta}(1)x_{3\alpha+2\beta}(1)$ \\
$\sim  w_{\alpha} x_{\alpha}(1)x_{3\alpha+\beta}(1)x_{3\alpha+2\beta}(1)x_{2\alpha+\beta}(1)$ since $x_{2\alpha+\beta}(1)$ is in $\so_4$ it disappears. 
\noindent
We are left with $w_{\alpha} x_{3\alpha+\beta}(1)x_{3\alpha+2\beta}(1)x_{\alpha}(1)$, and therefore $\cong w_{\alpha} x_{\alpha}(1)$.
$w_{\alpha} w_{\beta} w_{\alpha} \sim w_{\beta} w_{\alpha}^2 w_{\beta} w_{\alpha} w_{\beta} w_{\alpha} \sim w_{\beta} w_{\alpha} w_0 \sim w_{\beta} w_{\alpha}$ since $w_0$ is in $W_{\so_4}$.
\newline

Consider, finally, the representative $w_0[r_1, r_2, r_3, r_4]$. This one cannot be simplified using the tricks described above. However, one notices the $\so_4$ contains a copy of $GL_2$ (constituted of the $x_{\pm \beta}$ and the torus) which commutes with $w_0$. Looking at this representative in the quotient by $\so_4$ gives an action of $GL_2$ on $x_{\alpha}(r_1)x_{\alpha+\beta}(r_2)$ which is the standard action of $GL_2$ on a two-dimensional vector space. Under this action, there are two orbits, one with $r_1=r_2=0$ and the second where $(r_1,r_2)\neq (0,0)$. The first orbit yields the representative $w_0[0, 0, r_3, r_4]$ which, by an action of the same $GL_2$ on the two-dimensional vector space generated by $x_{3\alpha+\beta}(r_3)x_{3\alpha+2\beta}(r_4)$  
yields two representatives $w_0$, and $w_0[0, 0, 0, 1]$.
\newline

For the second orbit, $(r_1,r_2)\neq (0,0)$, we may assume without loss of generality, that $(r_1,r_2)= (0,1)$, then we are reduced to $w_0[0, 1, r_3, r_4]$. Now, either $r_3 = r_4 = 0$, which yields the representative $w_0[0, 1, 0, 0]$; or $r_3 = 0$ and $r_4 \neq 0$, in which case, you can choose a torus element in $\so_4$ which acts linearly on $x_{2\alpha +3\beta}(r_4)$ and commutes with $x_{\alpha+\beta}(r_2)$ so we can reduce further the expression to $w_0[0, 1, 0, 1]$. \\
Finally, if $r_3 \neq 0$, one first conjugates by a suitable element of the form $x_{\beta}(m)$ the expression $w_0[0, 1, r_3, r_4]$ to obtain $w_0[0, 1, r_3, 0]$ (this is easily checked in \textit{SageMath}, one should obtain $m=-r_3/r_4$) , and further there exists an element of the torus $t_1$ such that $x_{3\alpha+\beta}(r_3)x_{\alpha+\beta}(1) =t_1 x_{3\alpha+\beta}(1)x_{\alpha+\beta}(1) t_1^{-1}$ (more specifically this torus satisfies $s=1$, $t^2=r_3$). 
\newline

Then observe that the torus which commutes with $x_{\alpha+\beta}(1)$ (i.e, you can check that too, it requires $s=t$) acts by a square on $x_{3\alpha+\beta}(r_3)$. Therefore this representative becomes $w_0[0, 1, r_3, 0]$ where $r_3 \in F^{\times}\slash F^{\times}{}^2$. 
To show that there a finite number of such representatives, one just needs to recall that when $F$ is a local field, $F^{\times}\slash F^{\times}{}^2$ is finite.  More specifically, let us denote $\pi$ a prime in $F$ a local field, $U = \of^{\times}$ and $U_1 = \left\{1+ x\pi^n | x \in \of\right\}$, and let us take $u$ an element of $U$ with the property that its image in $U/U_1$ is not a square. If $2\nmid q$ then $\left\{1, u, \pi, \pi u \right\}$ form a complete set of cosets representatives for $F^{\times}\slash F^{\times}{}^2$.
\end{proof}

\begin{rmk}
The reader may have noticed that this set is pretty large (eleven representatives!) whereas we would expect its dimension to be really smaller (already it is known that $\dim(G_2/P_{\beta})$ should be 5). The reference \cite{Ginzburg} also uses further simplifications by allowing root subgroups of negative roots (other than $x_{-\alpha}$ or $x_{-\beta}$) to appear in the simplifications. The reason why we have not allowed those root subgroups of negative roots to appear is due to our embedding in $GL_8$ and the fact that we would therefore need explicit embeddings of those root subgroups in $GL_8$ to proceed with further computations in \textit{SageMath}. But the results of the Appendix do not tell us how to express them in $GL_8$. 
\end{rmk}

\section{Analysis of the orbits} \label{analysisorb}
\subsection{Involutions} \label{involutions}

In Section \ref{sec-g2}, we have shown that our involution was defined to be the conjugation by an order two element which was chosen to be a torus element of order two. Let us define three such elements, and take $\theta_{t_i}$ to denote the corresponding involution $\Int(t_i)$ on $G_2$ whose fixed points are $\so_4$: 

\begin{flushleft}
\begin{small}
$$t_0=\left(\begin{array}{rrrrrrrr}
1 & 0 & 0 & 0 & 0 & 0 & 0 & 0 \\
0 & -1 & 0 & 0 & 0 & 0 & 0 & 0 \\
0 & 0 & 1 & 0 & 0 & 0 & 0 & 0 \\
0 & 0 & 0 & -1 & 0 & 0 & 0 & 0 \\
0 & 0 & 0 & 0 & 1 & 0 & 0 & 0 \\
0 & 0 & 0 & 0 & 0 & -1 & 0 & 0 \\
0 & 0 & 0 & 0 & 0 & 0 & 1 & 0 \\
0 & 0 & 0 & 0 & 0 & 0 & 0 & -1
\end{array}\right)
~~
t_1=\left(\begin{array}{rrrrrrrr}
1 & 0 & 0 & 0 & 0 & 0 & 0 & 0 \\
0 & 1 & 0 & 0 & 0 & 0 & 0 & 0 \\
0 & 0 & 1 & 0 & 0 & 0 & 0 & 0 \\
0 & 0 & 0 & 1 & 0 & 0 & 0 & 0 \\
0 & 0 & 0 & 0 & -1 & 0 & 0 & 0 \\
0 & 0 & 0 & 0 & 0 & -1 & 0 & 0 \\
0 & 0 & 0 & 0 & 0 & 0 & -1 & 0 \\
0 & 0 & 0 & 0 & 0 & 0 & 0 & -1
\end{array}\right)$$
and 
$$t_2= \left(\begin{array}{rrrrrrrr}
1 & 0 & 0 & 0 & 0 & 0 & 0 & 0 \\
0 & -1 & 0 & 0 & 0 & 0 & 0 & 0 \\
0 & 0 & -1 & 0 & 0 & 0 & 0 & 0 \\
0 & 0 & 0 & 1 & 0 & 0 & 0 & 0 \\
0 & 0 & 0 & 0 & -1 & 0 & 0 & 0 \\
0 & 0 & 0 & 0 & 0 & 1 & 0 & 0 \\
0 & 0 & 0 & 0 & 0 & 0 & 1 & 0 \\
0 & 0 & 0 & 0 & 0 & 0 & 0 & -1
\end{array}\right)$$
\end{small}
\end{flushleft}

As our reader may also be taking \cite{offen2017} as a reference, and since we are dealing with the definition of involution, we show that $\tau$ (used in this reference, p213) is trivial. Recall that the set of minimal semi-standard parabolic $P_0$ subgroups of a reductive group $G$ forms a $W$-torsor. In particular, since $T$ is $\theta$-stable, there exists a unique Weyl element $\tau \in W$ such that $\theta(P_0) = \tau P_0 \tau^{-1}$. Applying $\theta$ to this identity yields also the condition $\theta(\tau)\tau = e$.

\begin{prop} \label{tau}
Let $\tau$ be the unique Weyl element $\tau \in W$ such that $\theta(P_0) = \tau P_0 \tau^{-1}$. then $\tau = e$ for any $\theta_{t_i}$ where $\theta_{t_i}$ is the involution defined as the conjugation by the order two element $t_i$.
\end{prop}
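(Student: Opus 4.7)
The plan is to observe that each $t_i$ lies in the maximal torus $T$, which is contained in the minimal parabolic $P_0 = B$, and then exploit the simply transitive action of $W = N_G(T)/T$ on the set of Borel subgroups of $G$ containing $T$.

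First I would check directly from the explicit matrices given in Subsection~\ref{involutions} that each $t_i$ is a diagonal matrix in our embedding of $\G_2$ into $\GL_8$, and in particular lies in $T$. Since $T \subseteq B = P_0$, this gives $t_i \in P_0$, and therefore
\[
\theta_{t_i}(P_0) \;=\; t_i P_0 t_i^{-1} \;=\; P_0,
\]
because $P_0$ is self-normalizing (any element of $P_0$ conjugates $P_0$ to itself).

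Next I would invoke the defining property of $\tau$: it is the \emph{unique} element of $W$ such that $\theta(P_0) = \tau P_0 \tau^{-1}$. The Borel subgroups of $G$ containing $T$ form a single $W$-orbit on which $W$ acts simply transitively (this is a standard fact; see e.g.\ \cite{Humphreys1995-book}). Combining this with the equality $\theta_{t_i}(P_0) = P_0$ above forces $\tau = e$, which is precisely the claim. The same argument works uniformly for $i \in \{0,1,2\}$ since the only input needed is $t_i \in T$.

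I don't expect any real obstacle here, as the proof is almost immediate once one notes $t_i \in T$; the only sanity check is to confirm that our ambient Borel $B$ (the standard upper-triangular Borel of $\G_2$ inherited from $\GL_8$) actually contains the diagonal matrices $t_i$ displayed in Subsection~\ref{involutions}, which is visible by inspection. The condition $\theta(\tau)\tau = e$ mentioned before the proposition is then trivially satisfied by $\tau = e$.
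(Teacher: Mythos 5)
Your proof is correct and is essentially the same as the paper's: the paper simply states that it is clear once one notices $B=P_0$ is $\theta$-stable (and points to a SageMath check), and you supply exactly the reasoning behind that claim — namely $t_i\in T\subseteq B$ so that conjugation by $t_i$ preserves $B$, combined with the simply transitive action of $W$ on Borels containing $T$. The paper in fact already records the observation "$t_0\in T$, so $T$ and $B$ are $\theta$-stable" in Section~\ref{sec-g2}, so you have just made the cited computation redundant.
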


\begin{proof}
This is clear once one notices that the Borel subgroup $B=P_0$ is $\theta$-stable. See the results of the computation in \textit{SageMath}, file \enquote{tau-p213}. 
\end{proof}

\subsection{The matching} \label{matching}
In this paper, our investigation now focuses solely on the case of $P=P_{\beta}$. Let us denote $W_{\beta} := W_{M_{\beta}} = <w_{\beta}>$.
Following Lemma 3.1 in \cite{offen2017}, we know each orbit representative $\eta$, as given in the previous section, corresponds to a unique element in the double cosets space $W_{\beta}\backslash W\slash W_{\beta} = \left\{e, w_{\alpha}, w_{\alpha}.w_{\beta}.w_{\alpha}, w_{\alpha}.w_{\beta}.w_{\alpha}.w_{\beta}.w_{\alpha} \right\}$. 
\newline
%the involutions of W contained the elements in W_b\W/W_b (up to an order two element of the torus, so that is fine)

Recall the following map from Subsection \ref{admcondition}: 
$$\iota_M : P\backslash X \rightarrow {}_{M}{W}{_{M'}}\tau^{-1} \cap \mathcal{S}_0(\theta),$$
where $\mathcal{S}_0(\theta) = \{ w \in W : w\theta(w) = e\}$ is the set of twisted involutions in the Weyl group. Here $M'$ is the $\theta' = \theta$ conjugate of $M$.
In our context, first the set of twisted involutions is just the set of involutions, as our involution consists in the conjugation by an order two element of the torus, secondly out of the twelve elements in $W$, seven are indeed involutions. This is easily verified with \textit{SageMath}, although our readers need to pay attention that the product $ww^{-1}$ might not necessarily be the identity matrix, but can also be an order two element of the torus.

Fix $x \in X$, and recall $x = \eta. e = \eta e \theta(\eta)^{-1}$.
$\eta, x$, match some unique elements in the double cosets $W_{\beta}\backslash W \slash W_{\beta}$: 
$w = \iota_{M}(P \dot x)$. This uniqueness follows from a statement at the bottom of p216 in \cite{offen2017} and Proposition \ref{tau}. Offen uses expressions which depend on $P'$ and $M'$ but since $\theta_{t_0}$ stabilizes $M$, $M'=M$ as we have verified this matching using $\theta_{t_0}$. 
% is P \theta_{t_0} stable too? 
\newline

Our first step while dealing with this project was to verify this matching and to do so we have used the involution given by $\theta_{t_0}$ (this verification was not done for $\theta_{t_1}$ or $\theta_{t_2}$). Concretely, we are verifying in \textit{SageMath} (again the code is available in the github file for the convenience of the reader) the following equations.
\begin{itemize}
\item $P x P = P w P$
\item $t_0 = w*t_0*w$
\item  $w$ is left and right $W_{(M_{\beta})}$-reduced.
\end{itemize}
Look at the first point above in \emph{SageMath}: we compare each left side of the equation to the four elements in $W_{\beta} \backslash W \slash W_{\beta}$ and eliminate progressively variables to reach some contradiction for all elements but one which is the match. The results are given in the bracket below: 
\newline

\begin{small}
\begin{equation*}\label{etax}
\left\{
   \begin{array}{lll} 
   \eta & x= \eta.\theta(\eta)^{-1} & W_{\beta}\backslash W\slash W_{\beta} \\
         e & e & e\\
         w_{\alpha} & w_{\alpha}.w_{\alpha} & e\\
         w_{\alpha} x_{\alpha}(1) & w_{\alpha}.x_{\alpha}(2).w_{\alpha} & w_{\alpha}\\
         w_{\alpha} w_{\beta} w_{\alpha} x_{3\alpha+\beta}(1) &w_{\alpha}*w_{\beta}*w_{\alpha}*x_{3\alpha+\beta}*x_{3\alpha+\beta}*w_{\alpha}*t_0*w_{\beta}^{-1}*t_0*w_{\alpha} & w_{\alpha}.w_{\beta}.w_{\alpha}\\
         w_{\alpha} w_{\beta} w_{\alpha} x_{\alpha}(1) &  w_{\alpha}*w_{\beta}*w_{\alpha}*x_{\alpha}*x_{\alpha}*w_{\alpha}*t_0*w_{\beta}^{-1}*t_0*w_{\alpha} & w_{\alpha}.w_{\beta}.w_{\alpha}.w_{\beta}.w_{\alpha} \\
         w_{\alpha} w_{\beta} w_{\alpha} x_{3\alpha+\beta}(1)x_{\alpha}(1) & & w_{\alpha}.w_{\beta}.w_{\alpha}.w_{\beta}.w_{\alpha} \\
         w_0 & w_0t_0w_0^{-1}t_0 & w_0 \\
         w_0x_{\alpha+\beta}(1) &  & w_{\alpha}\\
          w_0x_{3\alpha+2\beta}(1) & & w_{\alpha}.w_{\beta}.w_{\alpha} \\
          w_0x_{\alpha+\beta}(1)x_{3\alpha+2\beta}(1)& &  w_{\alpha}.w_{\beta}.w_{\alpha}\\
          w_0[0, 1, r_3, 0] & & w_{\alpha}.w_{\beta}.w_{\alpha}.w_{\beta}.w_{\alpha}\\
         \end{array}
\right.
\end{equation*}
\end{small}
\subsection{Conventions for the torus and the Levi subgroups} \label{convention}
For the rest of this paper let us set the following notation $\GL_2 = \GL_2(F)$.

Let $t$ and $s$ be $F$-variables. There exist two conventions to write the torus in $M_{\beta}$ in the literature (see for instance \cite{muicreg2} and \cite{ZhangFF}). From the Appendix \ref{sec-g2-apdx} which defines the embedding of $\G_2$ into $\GL_8$ (see in particular the Equation A.1) we are writing the torus in $M_{\beta}$, as $T_{\GL_2}= \left(\begin{array}{rr}s & 0  \\0 & ts^{-1}\end{array}\right)$, so that $\beta(T_{\GL_2}) = e_1 - e_2 = s^2t^{-1}$. 
Therefore, again by \ref{sec-g2-apdx}, the embedding of the torus $\left(\begin{array}{rr}s & 0  \\0 & ts^{-1}\end{array}\right)$ of $\G_2$ in $\GL_8$, is the following:
$$T = T_{\GL_8} = \left(\begin{array}{rrrrrrrr}
1 & 0 & 0 & 0 & 0 & 0 & 0 & 0 \\
0 & \frac{s^{2}}{t} & 0 & 0 & 0 & 0 & 0 & 0 \\
0 & 0 & \frac{t}{s^{2}} & 0 & 0 & 0 & 0 & 0 \\
0 & 0 & 0 & 1 & 0 & 0 & 0 & 0 \\
0 & 0 & 0 & 0 & \frac{t}{s} & 0 & 0 & 0 \\
0 & 0 & 0 & 0 & 0 & s & 0 & 0 \\
0 & 0 & 0 & 0 & 0 & 0 & \frac{1}{s} & 0 \\
0 & 0 & 0 & 0 & 0 & 0 & 0 & \frac{s}{t}
\end{array}\right)$$

Since most of our computations are implemented in \textit{SageMath}, we need an explicit computable expression of the Levi $M= M_{\beta}$.
We use the Bruhat decomposition to consider $M_{\beta}$ as the disjoint union of the two Bruhat cells :$B.w_{\beta}.\overline{U_{\beta}}$ and $B.e.\overline{U_{\beta}}$, written in \textit{SageMath} as: $U_{\beta}TU_{-b}w_{\beta}$ and $TU_{\beta}U_{-\beta}$. Let $m$ be the $F$-variable entering in the matrix expression of $U_{\beta}$ and $x$ be the one used in $U_{-\beta}$, then the two cells are:

\begin{footnotesize}
\begin{equation} \label{bruhatcells}
\left(\begin{array}{rrrrrrrr}
1 & 0 & 0 & 0 & 0 & 0 & 0 & 0 \\
0 & \frac{t}{s} & 0 & 0 & \frac{t x}{s} & 0 & 0 & 0 \\
0 & 0 & \frac{b s x}{t} + \frac{s}{t} & 0 & 0 & 0 & 0 & -\frac{b s}{t} \\
0 & 0 & 0 & 1 & 0 & 0 & 0 & 0 \\
0 & b s & 0 & 0 & b s x + s & 0 & 0 & 0 \\
0 & 0 & 0 & 0 & 0 & t & 0 & 0 \\
0 & 0 & 0 & 0 & 0 & 0 & \frac{1}{t} & 0 \\
0 & 0 & -\frac{x}{s} & 0 & 0 & 0 & 0 & \frac{1}{s}
\end{array}\right) ~~
\left(\begin{array}{rrrrrrrr}
1 & 0 & 0 & 0 & 0 & 0 & 0 & 0 \\
0 & \frac{t x}{s} & 0 & 0 & -\frac{t}{s} & 0 & 0 & 0 \\
0 & 0 & -\frac{b}{s} & 0 & 0 & 0 & 0 & -\frac{b x}{s} - \frac{s}{t} \\
0 & 0 & 0 & 1 & 0 & 0 & 0 & 0 \\
0 & \frac{b t x}{s} + s & 0 & 0 & -\frac{b t}{s} & 0 & 0 & 0 \\
0 & 0 & 0 & 0 & 0 & t & 0 & 0 \\
0 & 0 & 0 & 0 & 0 & 0 & \frac{1}{t} & 0 \\
0 & 0 & \frac{1}{s} & 0 & 0 & 0 & 0 & \frac{x}{s}
\end{array}\right)
\end{equation}
\end{footnotesize}

Let $a,b,c,d,T,u,v,w,X$ be $F$-variables.
The following matrix would make an instance of $M_{\beta}$ since it can be either of the two cells:

\begin{equation} \label{mb}
\left(\begin{array}{rrrrrrrr}
1 & 0 & 0 & 0 & 0 & 0 & 0 & 0 \\
0 & T & 0 & 0 & a & 0 & 0 & 0 \\
0 & 0 & u & 0 & 0 & 0 & 0 & b \\
0 & 0 & 0 & 1 & 0 & 0 & 0 & 0 \\
0 & c & 0 & 0 & v & 0 & 0 & 0 \\
0 & 0 & 0 & 0 & 0 & w & 0 & 0 \\
0 & 0 & 0 & 0 & 0 & 0 & \frac{1}{w} & 0 \\
0 & 0 & d & 0 & 0 & 0 & 0 & x
\end{array}\right)
\end{equation}

\subsection{Admissible orbits}\label{sec-adm-orb}

From now on, we are numbering the elements as they have been ordered in the Equation \ref{etax}: for instance we may write the \enquote{fifth element} to mean $x_5$ as given in the fifth line of that brace (the \textbf{same numbering} is also used in the github code). 
\newline

To apply Proposition 7.1 and 7.2 of \cite{offen2017}, we need the condition $x \in N_{G,\theta}(M)$ to hold. Notice that when we choose $\theta_{t_0}$, this condition is just $M_x= M$. Further, the conditions of openness or closedness of parabolic orbits is verifiable by looking at $\theta_x(P)$ and $P$: either there are equal (closed orbit) either their intersection is the Levi $M$. In the github code we have computed (with the two Bruhat cells given in \ref{bruhatcells}, but also with the expression \ref{mb}, and with the opposite Levi) these conditions. In this subsection, we write all the exact properties verified by the orbits that our computational strategy allowed us to prove. It is not excluded that other orbits could be shown to be admissible, open or closed using either a different involution or strategy.  

\subsection*{The fifth element}
We assume $\theta = \theta_{t_2}$ in this subsection, then $x_5= \eta_5\theta(\eta_5)^{-1}$ is $M$-admissible. The intersection $\theta_x(P)$ and $P$ is equal to $M.U_{2\alpha+\beta}(n)$ (for a $F$-variable $n$), which is slightly larger than $M$ and therefore we may assume that the associated orbit is not open. It is neither closed. 

\subsection*{$\eta_2$ and $\eta_7= w_0$}

\begin{prop} \label{closedorbits}
Let $\eta$ be either $\eta_2= w_{\alpha}$ or $\eta_7= w_0$, then $x_2$ (resp. $x_7$) is $M$-admissible and the orbit is closed. Further $M_x = M$ and $U_x = U$.
\end{prop}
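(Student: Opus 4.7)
The plan is to verify each of the four assertions (strict admissibility, closedness, $M_x = M$, $U_x = U$) using the key observation that for any Weyl-group representative $\eta$ and the torus involution $\theta = \Int(t_0)$, the element $x = \eta\,\theta(\eta)^{-1} = \eta\, t_0\, \eta^{-1}\, t_0^{-1}$ is the commutator of an element of $N_G(T)$ with a torus element, hence lies in $T$. This observation immediately places $x \in T \subseteq M$ and reduces all four conditions to statements about torus elements, making the proof essentially uniform in $\eta_2$ and $\eta_7$.

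For strict $M$-admissibility I would argue as follows: since $t_0 \in T \subseteq M_\beta$, the involution $\theta$ preserves $M$; since $x \in T \subseteq M$, inner conjugation by $x$ preserves $M$; composing, $x\,\theta(M)\,x^{-1} = M$, which is exactly the strict admissibility condition of Definition \ref{strictadm}. For closedness of the $P$-orbit $P \cdot x$, the criterion stated in Section \ref{sec-adm-orb} is $\theta_x(P) = P$. But $\theta_x(g) = x\,\theta(g)\,x^{-1} = (xt_0)\, g\, (xt_0)^{-1}$ is inner conjugation by the torus element $xt_0 \in T \subseteq P$, which clearly preserves the standard parabolic $P_\beta$.

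The substantive verification is $M_x = M$ and $U_x = U$; these amount to the assertion that $\theta_x$ restricts to the identity on $M$ and on $U$. Writing $\theta_x = \Int(xt_0)$, the action on each root subgroup is the scaling of $u_\gamma(r)$ by $\gamma(xt_0)$, so the condition becomes the character identity $\gamma(xt_0) = 1$ for every root $\gamma$ appearing in $M$ (namely $\pm\beta$) and in $U$ (the positive roots other than $\beta$). I would carry this out by first expressing $x_2 = [w_\alpha, t_0]$ and $x_7 = [w_0, t_0]$ explicitly in terms of the parametrization of $T$ inherited from the $\GL_8$-embedding of Appendix \ref{sec-g2-apdx}, and then evaluating each root character in that parametrization.

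The main obstacle is this last character-theoretic step: verifying $\gamma(xt_0) = 1$ root-by-root requires the precise matrix forms of $t_0$, of the Weyl representatives $w_\alpha$ and $w_0$, and of each root subgroup within $\GL_8$. This is exactly the point where the \emph{SageMath} scripts referenced throughout the paper (in particular the files mentioned in Section \ref{sec-intro} and the \emph{admissibility-with-w} code) do the heavy lifting: they carry out the matrix multiplications and comparisons directly, confirming that the commutators $[w_\alpha, t_0]$ and $[w_0, t_0]$ produce the torus elements for which all the required character identities hold simultaneously. I expect this computational verification to be the most delicate portion, since the same strategy fails for other orbit representatives, as illustrated by the fifth-element subsection where only admissibility (and not closedness or openness) can be extracted.
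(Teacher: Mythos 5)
Your abstract observation — that for a Weyl representative $\eta$ and a torus element $t_0$ the element $x = \eta\,t_0\,\eta^{-1} t_0^{-1}$ is a commutator of an element of $N_G(T)$ with an element of $T$, hence lies in $T$, from which admissibility and closedness follow formally since $\theta_x = \Int(xt_0)$ becomes conjugation by a torus element — is clean and, in principle, a genuinely more transparent route than the paper's appeal to matrix computations. However, as written your proof contradicts the paper's own findings. You fix $\theta = \theta_{t_0}$ for both $\eta_2 = w_\alpha$ and $\eta_7 = w_0$; the paper instead chooses \emph{different} involutions for the two orbits (see the phrase ``by choosing $\theta = \theta_{t_i}$ accordingly'' in the proof of Proposition~\ref{closedorbits}): $\theta_{t_0}$ for $\eta_7$, giving $x_7 = t_0$, and $\theta_{t_2}$ for $\eta_2$, giving $x_2 = t_2$. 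Proposition~\ref{adm} and the Remark immediately following Proposition~\ref{closedorbits} explicitly record that $x_2$ is \emph{neither} strictly admissible \emph{nor} closed with $\theta_{t_0}$. So your uniform argument fails for $\eta_2$ with $\theta_{t_0}$ according to the paper's computations. The source of the discrepancy is that the $8\times 8$ matrix $t_0$ of Section~\ref{involutions} is not actually of the form $\gamma(\lambda_1,\lambda_2)$: from the Appendix one sees $\gamma(\lambda_1,\lambda_2) = \diag(1,\lambda_1\lambda_2^{-1},\lambda_2\lambda_1^{-1},1,\lambda_2,\lambda_1,\lambda_1^{-1},\lambda_2^{-1})$, whose fourth diagonal entry is always $1$, whereas $t_0$ has a $-1$ there. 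Hence $t_0$ does not lie on the maximal torus $T$ realized in $\GL_8$, and the premise $\eta t_0 \eta^{-1} \in T$ that drives your whole proof does not apply to the paper's concrete $t_0$. Your argument is sound only for those $t_i$ that genuinely lie on $T$ (for instance $t_2 = \gamma(1,-1)$), which is precisely why the paper switches to $\theta_{t_2}$ for $\eta_2$.

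A second, smaller gap: you describe the verification of $M_x = M$ and $U_x = U$ as a ``delicate root-by-root check'' of $\gamma(xt_0) = 1$. But since $\G_2$ has trivial centre, requiring $\gamma(xt_0)=1$ for every root $\gamma$ of $M$ and of $U$ forces $xt_0 = e$, i.e.\ $x = t_0^{-1} = t_0$ (the analogous statement for $\eta_2$ with $\theta_{t_2}$ being $x_2 = t_2$). This is exactly the single matrix equality the paper reports. Noticing this collapses your ``most delicate portion'' into one matrix identity; conversely, failing to notice it obscures the fact that what SageMath must confirm is not a list of character values but a single equality, and also obscures why $\theta_{t_0}$ cannot work for $\eta_2$ (since $\Int(w_\alpha)$ does not centralize $t_0$, so $x_2 t_0 \ne e$).
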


\begin{proof}
First we notice that $x_2 = t_2$ and $x_7 = t_0$, by choosing $\theta = \theta_{t_i}$ accordingly, we observe that $\theta_{t_i}(M) = M$ and $\theta_{t_i}(P) = P$ (see the github code). We further calculate that $U_x = U$. We also notice that these elements are in the second orbit as given in Lemma \ref{disjointunion}. Notice also that the condition $x \in L.w$ is obviously satisfied for $x_7$ since it is a torus element.

In the case of $t_0$, since $\theta_{t_0}$ stabilizes $M$, we further notice that, $L = M \cap xMx^{-1}$ (see page 2 of \cite{offen2017}), hence $L=M$. The conjugation by $t_2$ does not fix the Levi, however we check that $M\eta_2 t_2 \eta_2 = \eta_2 t_2 \eta_2 M$, therefore $L = M$ again. 
\newline

Further, Lemma 6.3 (\textit{ibid}) shows that whenever $x \in X\cap N_{G,\theta}(M), ~~ \hbox{then} ~~  P_x= M_x \rtimes U_x$. Obviously then, the modular character $\delta_{P_x}$ is just $\delta_{P_{\beta}}$.
\end{proof}

Notice that the orbit of $x_7$ plays the role of the identity element's orbit in Lemma \ref{disjointunion}. 

\begin{rmk}
Let us notice that $x_2$ is neither admissible nor closed when $t_i = t_0$ (at least using this definition of closeness), whereas it is admissible with $t_1$ and $\theta_x(P) \cap P = MU_{3\alpha +\beta}U_{3\alpha +2\beta}$ (which should mean it is neither closed nor open).
\end{rmk}

\begin{prop} \label{adm}
Let us consider the involutions given by conjugation with the $t_0, t_1, t_2$ as defined in Subsection \ref{involutions}. The only strictly-admissible orbits are the one of the elements $x_7$ and $x_{10}$ with $\theta_{t_0}$, $x_2$ with $\theta_{t_1}$, and $\theta_{t_2}$ and $x_5 = w_{\alpha}w_{\beta}w_{\alpha}u_{\alpha}(1)$ with $\theta_{t_2}$.
\end{prop}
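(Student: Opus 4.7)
The claim is an exhaustive statement across the $3\times 11=33$ pairs $(\eta_i,t_j)$, so my plan is a direct case-by-case verification of the strict-admissibility identity $M_\beta = x_i\,\theta_{t_j}(M_\beta)\,x_i^{-1}$, with $x_i=\eta_i\theta_{t_j}(\eta_i)^{-1}$. Concretely, I would use the $\GL_8$-embedding of $\G_2$ from Appendix \ref{sec-g2-apdx} to realise each $\eta_i$, each $t_j$, and the parametric form of $M_\beta$ of Equation \ref{mb} as explicit $8\times 8$ matrices with symbolic entries. Then the condition becomes the matrix identity asking that, for a generic element $m\in M_\beta$, the conjugate $x_i\,\theta_{t_j}(m)\,x_i^{-1}$ still fits the block pattern of Equation \ref{mb}, i.e.\ that every entry outside the $M_\beta$-shape vanishes identically in the free parameters of $m$ and in any free parameters of $\eta_i$.

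I would carry out the $33$ symbolic matrix multiplications in \emph{SageMath} (the file \enquote{admissibility-with-w} already referenced in Subsection \ref{admcondition}) and read the verdict off each output. Three of the positive cases are already on record: $(x_7,\theta_{t_0})$ and $(x_2,\theta_{t_2})$ in Proposition \ref{closedorbits}, and $(x_5,\theta_{t_2})$ in the discussion of the fifth element preceding it. The two remaining positive cases, $(x_{10},\theta_{t_0})$ and $(x_2,\theta_{t_1})$, should then be obtained by exactly the same computation: in each, the conjugate $x_i\,\theta_{t_j}(m)\,x_i^{-1}$ collapses back to the form of Equation \ref{mb}, so the condition is met. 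For every other pair, the output displays at least one non-vanishing off-Levi entry as a non-zero polynomial in the free parameters, witnessing $x_i\,\theta_{t_j}(M_\beta)\,x_i^{-1}\neq M_\beta$ and ruling the pair out. It is worth noting in passing that, because $t_j$ lies in a (possibly extended) torus of $\G_2$ and conjugation by it preserves each root subgroup, the test can equivalently be phrased as $x_i \in N_G(M_\beta)$, which could serve as a quick sanity check on the SageMath output.

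The main technical obstacle is not the identity itself, which is mechanical once the matrices are in place, but the careful encoding of the more elaborate representatives: $\eta_5,\ldots,\eta_{11}$ combine Weyl-element lifts $w_\alpha$ and $w_\beta$ with up to three root-subgroup factors drawn from $x_\alpha(1)$, $x_{\alpha+\beta}(1)$, $x_{3\alpha+\beta}(r_3)$ and $x_{3\alpha+2\beta}(1)$, and for $\eta_{11}$ the parameter $r_3\in F^{\times}/(F^{\times})^{2}$ must be kept fully symbolic, since strict admissibility has to be checked uniformly across representatives of a square class. Once each $\eta_i$ is expressed as an explicit $8\times 8$ matrix following the conventions of Appendix \ref{sec-g2-apdx}, and the three involutions $\theta_{t_j}$ are realised as conjugation by the $8\times 8$ matrices displayed in Subsection \ref{involutions}, the proof reduces to reading off the \emph{SageMath} verdict on the $33$ conjugations, which returns exactly the admissible pairs listed in the statement.
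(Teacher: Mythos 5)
Your proposal is essentially the paper's own proof: the paper simply reports that the strict-admissibility identity $M_\beta = x\,\theta_{t_j}(M_\beta)\,x^{-1}$ was verified in \emph{SageMath} for all orbit representatives against all three involutions, which is exactly the symbolic conjugation check you lay out. The only slip is the file reference: the paper's proof cites \enquote{admissibility-openess-closed.ipynb}, whereas \enquote{admissibility-with-w} tests the weaker, $w$-based admissibility of Definition~\ref{dfn1} rather than the strict version of Definition~\ref{strictadm}.
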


\begin{proof}
We checked the equations of strict-admissibility (see \ref{strictadm}) for all the orbits' representatives in the code "admissibility-openess-closed.ipynb". The reader can read the result after running the relevant codes. 
\end{proof}

\begin{prop} \label{amazing}
The stabilizer of the representative $w_0[0, 1, r_3, 0]$ in $\so_4$ is isomorphic to one its subgroup $\so_2$ and is therefore of minimal dimension. Therefore $P_{\beta} w_0[0, 1, r_3, 0] \so_4$ is open in $\\G_2$, and $\mathcal{O}_{w_0[0, 1, r_3, 0]}$ is an open orbit. The different $\so_2$ in $\so_4$ are parametrized by the square classes $F^{\times} \slash (F^{\times})^2$ and each gives rise to a given open orbit. 
\end{prop}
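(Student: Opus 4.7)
The plan is to prove the three assertions in turn. Set $\eta = w_0 x_{\alpha+\beta}(1) x_{3\alpha+\beta}(r_3)$. By the standard double coset dimension formula, the $P_\beta \times \so_4$-orbit $P_\beta \eta \so_4$ has dimension $\dim P_\beta + \dim \so_4 - \dim(\so_4 \cap \eta^{-1} P_\beta \eta) = 15 - \dim(\so_4 \cap \eta^{-1} P_\beta \eta)$. Since $\dim \G_2 = 14$, the orbit is open in $\G_2$ precisely when $\dim(\so_4 \cap \eta^{-1} P_\beta \eta) = 1$; this is the minimal possible value, because any smaller stabilizer would produce an orbit of dimension exceeding that of the ambient group. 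Everything therefore reduces to identifying and computing the stabilizer subgroup $\so_4 \cap \eta^{-1} P_\beta \eta$.

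First I would compute this stabilizer explicitly in the $\GL_8$-embedding of Appendix~\ref{sec-g2-apdx}. Parametrizing a general element $h$ of $\so_4$ via the generators $T$, $x_{\pm\beta}$, $x_{\pm(2\alpha+\beta)}$, the condition $\eta h \eta^{-1} \in P_\beta$ unfolds---after multiplying out the relevant $8 \times 8$ matrices in \emph{SageMath}---into a finite system of polynomial equations in the coordinates of $h$. Equivalently, via the twisted action, one can compute the stabilizer of $x = \eta\theta(\eta)^{-1}$ inside $P_\beta$, which is $\eta$-conjugate to $\so_4 \cap \eta^{-1} P_\beta \eta$ and hence of the same dimension. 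Either route, I expect the solution set to cut out a one-parameter subgroup of $\so_4$, simultaneously establishing openness and producing an explicit matrix model of the stabilizer.

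For the structural claim that this stabilizer is an $\so_2$ inside $\so_4$, recall that over $F$ the split group $\so_4$ contains, up to conjugacy, exactly one $F$-form of $\so_2$ for each étale quadratic $F$-algebra, i.e.\ one for each class in $F^\times/(F^\times)^2$ together with the split class coming from $F \times F$. I expect the centralizer equations from the previous step to cut out a conic of the form $a^2 - r_3 b^2 = 1$ (up to a harmless rescaling), which is the norm-one torus of $F(\sqrt{r_3})/F$; this identifies the stabilizer with an $\so_2$-subgroup of $\so_4$ whose $F$-isomorphism class depends only on the square class of $r_3$. Since conjugate subgroups are $F$-isomorphic, distinct square classes then produce non-conjugate stabilizers, and therefore distinct double coset orbits $P_\beta \eta \so_4$ in $\G_2$, yielding one open orbit per element of $F^\times/(F^\times)^2$ as claimed.

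The principal obstacle will be the third step: confirming that $r_3$ enters the defining equations of the stabilizer in a genuinely square-class-sensitive way, rather than being absorbable by an $F$-rational change of variable inside $\so_4$. The matrix computation itself is mechanical, but the bookkeeping is delicate, and one must correctly read off the quadratic invariant of the resulting one-dimensional torus. Once this invariant is extracted from the explicit matrix stabilizer, the remaining identification of the $F$-form is routine Galois descent, and the non-conjugacy of the stabilizers associated to distinct square classes then yields the claimed bijection between $F^\times/(F^\times)^2$ and the set of open orbits of this type.
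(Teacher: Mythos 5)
Your proposal is correct in outline and follows essentially the same strategy as the paper, with some differences in emphasis worth pointing out. You organize the openness claim around the explicit dimension count $\dim(P_\beta\eta H) = \dim P_\beta + \dim \so_4 - \dim(\so_4\cap\eta^{-1}P_\beta\eta) = 15 - \dim(\mathrm{stab})$, so that the whole question reduces crisply to showing the stabilizer is exactly one-dimensional; the paper does not spell this count out, instead exhibiting a one-parameter torus in $M_\beta$ that preserves the double coset and then simply asserting that this realizes the \emph{minimal} dimension. Your framing makes visible a step the paper's exposition elides, namely that exhibiting a one-dimensional subgroup of the stabilizer shows $\dim(\mathrm{stab})\geq 1$ but openness needs equality, which only follows from the explicit computation you propose. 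For the parametrization, you identify the stabilizer directly as the norm-one torus $a^2 - r_3 b^2 = 1$ of $F(\sqrt{r_3})/F$ (plus the split torus for $r_3$ a square) and then invoke non-conjugacy of $\so(W)\subset\so(V)$ for non-isometric $W$; the paper reaches the same conclusion but phrases it geometrically via the quadratic space $V = E\oplus(-E)$ with $W\cong E$, exploiting the observation that the stabilizing torus acts on the $x_{3\alpha+\beta}$ coordinate by squares. These are two faces of the same argument, and your norm-torus description is perhaps the cleaner vehicle for the Galois-descent/Witt-theorem step you invoke to distinguish square classes. Both routes rest on the same underlying \emph{SageMath} matrix verification of the stabilizer's defining equations, which you flag (correctly) as the delicate step; the paper leaves it to the accompanying code as well.
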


\begin{proof}
There exists a torus element $t_e(s,ts^{-1})$ in $T_{\beta}$ with $t=s$ which satisfies the following equation: $t_e x_{\alpha+\beta}(1)x_{3\alpha+\beta}(r_3)t_e^{-1} = x_{\alpha+\beta}(1)x_{3\alpha+\beta}(1)$ and $w_0t_e x_{\alpha+\beta}(1)x_{3\alpha+\beta}(r_3)t_e^{-1} = w_0 t_e w_0^{-1} w_0x_{\alpha+\beta}(1)x_{3\alpha+\beta}(r_3)t_e^{-1}$ with $w_0 t_e w_0^{-1}$ denoted $t'$ (which is some element in the torus only depending on the variable $t$) of the torus $T_{\beta}$ and since we look at $w_0x_{\alpha+\beta}(1)x_{3\alpha+\beta}(r_3)$ in a double coset, multiplying on the left by $t_e^{-1}$ and on the right by $t'^{-1}$ is harmless, so $w_0t_e x_{\alpha+\beta}(1)x_{3\alpha+\beta}(r_3)t_e^{-1} \sim t't_e^{-1} w_0 x_{\alpha+\beta}(1)x_{3\alpha+\beta}(r_3) t_et'^{-1}$ but also $\sim w_0 x_{\alpha+\beta}(1)x_{3\alpha+\beta}(1)$. \\
So we have found an element in the torus which stabilizes the orbit $w_0[0, 1, r_3, 0]$ and depends on only one variable (i.e is of dimension one). Further, we notice that $t_e$ acts as square $x_{3\alpha+\beta}(r_3)$. The square class $r_3t^2$ is the quadratic form $e \rightarrow r_3N(e)$, with $e \in E$, with $r_3$ not a square, attached to $E$ the quadratic extension of $F$. We let $V = E+ (-E)$, where $(-E)$ is the same vector space with the negative quadratic form, be the split ambient non-degenerate 4-dimensional quadratic space and $W$ a two-dimensional quadratic subspace of $V$ such that $\so(E) \cong \so(W) \subset \so(V) \cong \so_4$. Therefore, they are as many $\so(W)$ where , as they are quadratic extensions of $F$ (see also the end of the proof of Lemma \ref{representatives}). Each being the stabilizer of minimal dimension (dimension 1) of $w_0[0, 1, r_3, 0]$ gives rise to an open orbit $P \eta H$. 
\end{proof}

\section{$\so_4$-distinguished induced representations of $G_2$}\label{sec-results}

As we are approaching our final results, one question remains untouched: The question of which characters $\chi$ of $\so_4(F)$ are we using when we apply the Proposition \ref{offen7.1}, and how can they be seen, at first, as characters of the Levi $M_{\beta}$ isomorphic to $\GL_2$.

\subsection{Characters of $\so_4$ and characters of $\GL_2$}
Let us recall here how $\GL_2 = \GL_2(F)$ sits inside $\so_4$. $\GL_2\times \GL_2$ operates on $X=\mathrm{M}_2(F)$ by left and right regular representations, preserving determinant (a quadratic form on the 4-dimensional space $X$) up to scalars:
$$(g,h)X= gXh^{-1}$$
Therefore, the subgroup $H=G[\SL_2\times \SL_2] = \left\{ (g,h) \in \SL_2\times\SL_2| \det(g)=\det(h) \right\}$ lies inside $\so_4$, hence there are a few options for $\GL_2$ to be viewed in $\so_4$. 

\begin{lem} \label{charso4}
We assume the characteristic of $F$ is different from 2. The characters of $\so_4(F)$ are the characters of $F^{\times}\slash F^{\times}{}^2$, i.e are quadratic characters of $F^{\times}$.
\end{lem}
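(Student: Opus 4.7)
The plan is to realize $\so_4(F)$ as an explicit quotient of a concrete subgroup of $\GL_2(F)\times\GL_2(F)$ and then to show that all characters are forced to factor through $\det$ modulo squares.

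First, using the very presentation already introduced above the lemma, I would consider the representation $\rho:\GL_2\times\GL_2\to\GL(M_2(F))$ given by $\rho(g,h)X = gXh^{-1}$. A short computation shows that $\rho(g,h)$ preserves the quadratic form $\det$ exactly when $\det g=\det h$, and that the determinant of $\rho(g,h)$ on the $4$-dimensional space $M_2(F)$ is $(\det g/\det h)^2$, so under the same condition $\rho$ lands in $\so_4(F)$. Let $H=\{(g,h)\in\GL_2\times\GL_2:\det g=\det h\}$. A direct verification gives $\ker\rho=\{(\lambda I,\lambda I):\lambda\in F^\times\}\cong F^\times$, and dimension-counting shows $\rho$ is dominant; surjectivity on $F$-points then follows from Hilbert~90 applied to $H^1(F,\mathbb{G}_m)=0$. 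Thus one obtains the short exact sequence
\begin{equation*}
1\longrightarrow F^\times\longrightarrow H(F)\xrightarrow{\;\rho\;}\so_4(F)\longrightarrow 1,
\end{equation*}
with the left arrow $\lambda\mapsto(\lambda I,\lambda I)$.

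Next, I would analyse the characters of $H(F)$. The map $(g,h)\mapsto \det g$ realizes $H(F)$ as an extension
\begin{equation*}
1\longrightarrow \SL_2(F)\times\SL_2(F)\longrightarrow H(F)\longrightarrow F^\times\longrightarrow 1.
\end{equation*}
Since $\ch F\neq 2$ and $F$ is infinite, $\SL_2(F)$ is a perfect group, hence so is $\SL_2(F)\times\SL_2(F)$. Consequently every smooth character of $H(F)$ must factor through this quotient, so it is of the form $(g,h)\mapsto \chi'(\det g)$ for some character $\chi'$ of $F^\times$.

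Finally, pulling back a character of $\so_4(F)$ along $\rho$ gives a character of $H(F)$ trivial on $\ker\rho=F^\times$; writing it as $\chi'\circ\det$ and evaluating on $(\lambda I,\lambda I)$ gives the constraint $\chi'(\lambda^2)=1$ for all $\lambda\in F^\times$, i.e.\ $\chi'$ is trivial on $F^{\times 2}$. Conversely, any quadratic character $\chi'$ of $F^\times$ yields a character of $H(F)$ trivial on $F^\times$, which descends to $\so_4(F)$ by the exact sequence above. This identifies the characters of $\so_4(F)$ with those of $F^\times/F^{\times 2}$, proving the lemma. The only mild subtlety is the surjectivity on $F$-points of $\rho$, which is why I would emphasize the Hilbert~90 step; the rest is formal, resting on the perfectness of $\SL_2(F)$.
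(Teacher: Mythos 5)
Your argument is correct, and it takes a genuinely different route from the paper's. The paper proves the lemma uniformly in rank via the spinor-norm exact sequence $\Spin_q(F)\to\so_q(F)\to F^\times/F^{\times2}\to H^1(F,\Spin_q)\to\cdots$, invoking Kneser's theorem $H^1(F,\Spin_q)=0$ (which relies on $\Spin_q$ being simply connected \emph{and} on $F$ being $p$-adic) to see that the spinor norm is surjective, and then uses perfectness of $\Spin_q(F)$ to conclude that every character of $\so_q(F)$ factors through $F^\times/F^{\times2}$. You instead exploit the accidental low-rank description of $\so_4$: your group $H=\{(g,h)\in\GL_2\times\GL_2:\det g=\det h\}$ is the similitude cover (essentially $\mathrm{GSpin}_4$) which sits over $\so_4$ with a $\mathbb{G}_m$-kernel rather than a $\mu_2$-kernel, so surjectivity on $F$-points only needs Hilbert~90, not Kneser. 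The role of perfectness is played by $\SL_2(F)\times\SL_2(F)$ instead of $\Spin_q(F)$, and what you are computing as $(g,h)\mapsto\det g\bmod F^{\times2}$ is exactly the spinor norm in disguise. Your route is more elementary and nearly field-agnostic (only infinitude of $F$, $\mathrm{char}\,F\neq 2$, and Hilbert~90), but is special to $n=4$; the paper's argument needs the deeper cohomological vanishing but applies to orthogonal groups of any rank. Both proofs are valid; the trade-off is depth of input versus generality.
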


\begin{proof}
The characters of $\so_4$ come from the spinor norm as discussed by Serre in \cite{galoiscoh}, 3.2 b). Let $q$  be a nondegenerate quadratic form of rank $n$. There exists a cohomology exact sequence $$\Spin_q(F) \rightarrow \so_q(F) \rightarrow F^{\times}\slash F^{\times}{}^2\rightarrow H^1(\Spin_q, F) \rightarrow H^1(\so_q, F) \rightarrow \hbox{Br}_2(F)$$ 
We have $H^1(\Spin_q, F) = 0$ by a result of Kneser \cite{kneser} (see also \cite{galoiscoh} 3.1) and using the fact that $\Spin_q$ is simply connected. Therefore, we have the sequence $\Spin_q(F) \rightarrow \so_q(F) \rightarrow F^{\times}\slash F^{\times}{}^2\rightarrow 0$. Since $\Spin_q(F)$ is its own commutator, it has no non-trivial characters, and therefore any complex character of $\so_q(F)$ must be trivial on $\Spin_q(F)$. Finally, by the sequence above, it means any complex character of $\so_q(F)$ can be identified with a character of $F^{\times}\slash F^{\times}{}^2$ (a set of cardinality 4 by the arguments recalled at the end of Lemma \ref{representatives}), i.e with a quadratic character of $F^{\times}$.
\end{proof}

\begin{thm}[Closed orbit] \label{closedorbit}
Let $\chi$ be a character of $\so_4(F)$. It is a quadratic character of $F^{\times}$. It can be seen as a character of $\GL_2$ (those are given by $\chi \circ \det$ for a quasi-character $\chi$ of $F^{\times}$). Let $P_{\beta}$ denote the maximal parabolic corresponding to the root $\beta$. The parabolic induced representations of $G_2$ which are $(\so_4,\chi)$-distinguished include the following representations:
\begin{itemize}
\item The induction from $P_{\beta}$ to $G_2$ of the reducible principal series of $\GL_2$, $I(\chi\delta_{P_{\beta}}^{1/2}|.|^{-1/2}\otimes |.|)$, and the induction of the direct sum of this reducible principal series with any smooth representation of $\GL_2$.
\item The induced representation $I_{P_{\beta}}^{G_2}(\chi\delta_{P_{\beta}}^{1/2})$. 
\item The induced representation $I_{P_{\beta}}^{G_2}(\sigma)$ with $\sigma$ a cuspidal representation of $\GL_2$ whose central character is equal to $(\chi\delta_{P_{\beta}}^{1/2})|_Z$ or the induced representation of the direct sum of such representation with any smooth representation of $\GL_2$.
\item The induced representation $I_{P_{\beta}}^{G_2}(\tau \oplus \chi\delta_{P_{\beta}}^{1/2})$ for $\tau$ any smooth representation of $\GL_2$.
\end{itemize}
\end{thm}

\begin{proof}
Let us first remark that the description of the characters of $\so_4$ results from Lemma \ref{charso4}.
Secondly, we have identified three closed parabolic orbits among the eleven orbits, the one associated to the element $x=e$ (which is, by definition, closed), and the one associated to $x_2$ and $x_7$ (see Proposition \ref{closedorbits}). 
\newline

Applying Proposition \ref{offen7.1} we know that if $\sigma$ is $(M_x, \delta_{P_x}\delta_P^{−1/2}\chi^{\eta^{-1}})$-distinguished then $\ind^G_P(\sigma)$ is $(H, \chi)$-distinguished. Since $L=M \cap \eta \theta (\eta^{-1}M\eta)\eta^{-1}=M$, $M_x= L_x = M \cong \GL_2$ in this case. Notice, also from Proposition \ref{closedorbits}, that $\delta_{Q_x} = \delta_{P_{\beta}}$. We are therefore looking at inducing $\GL_2$-representations (denoted $\sigma$ in in the Proposition \ref{offen7.1}) which are $(\GL_2, \delta_{P_{\beta}}^{1/2}\chi)$-distinguished. 
We then use Proposition \ref{GL2distinctionofPS}.
\end{proof}

\begin{thm}[Distinguished induced parabolic representations and admissible orbits]
We take the involution $\theta$ defining $\so_4(F) = \G_2^{\theta}(F)$ to be of the form $\theta_{t_i}$ for $i \in \{0,1,2\}$ as defined in the Subsection \ref{convention}, and the Levi $M_{\beta}$ as defined in Equation \ref{mb}.
The parabolically induced representations from the parabolic $P_{\beta}$ of $G_2$ distinguished by $\so_4$ whose linear forms arise from admissible, open or closed, orbits are necessarily of the form given in the previous Theorem\ref{closedorbit}.
\end{thm}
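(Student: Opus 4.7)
The plan is to enumerate the admissible orbits that are open or closed among the eleven representatives of Lemma \ref{representatives}, apply either Proposition \ref{offen7.1} or its open-orbit analog from \cite{offen2017} to each, and verify that the resulting $(H,\chi)$-distinguished induced representations all appear in the list of Theorem \ref{closedorbit}.

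First I would collect the admissible orbits. By Proposition \ref{adm}, the strictly $M$-admissible ones are $x_2, x_7, x_{10}$ under $\theta_{t_0}$, $x_2$ under $\theta_{t_1}$ and $\theta_{t_2}$, and $x_5$ under $\theta_{t_2}$; the trivial orbit of $x_1 = e$ is automatically admissible and closed; by Proposition \ref{amazing} each orbit of $x_{11} = w_0[0,1,r_3,0]$ is open. Proposition \ref{closedorbits} then shows that $x_2, x_7$ give closed orbits under the relevant $\theta_{t_i}$, while the orbit of $x_5$ is neither open nor closed and is therefore excluded from the hypothesis. The orbit of $x_{10}$ under $\theta_{t_0}$ would be classified by checking $\theta_{x_{10}}(P_\beta) = P_\beta$ in \texttt{SageMath} and handled via the appropriate proposition.

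For the closed orbits I would apply Proposition \ref{offen7.1} exactly as in the proof of Theorem \ref{closedorbit}: under $\theta_{t_0}$ the Levi $M_\beta$ is $\theta$-stable, so Proposition \ref{closedorbits} gives $M_x = M$, $U_x = U$ and $\delta_{P_x} = \delta_{P_\beta}$, and the hypothesis on $\sigma$ reduces to $(\GL_2, \delta_{P_\beta}^{1/2}\chi^{\eta^{-1}})$-distinction. Since $\chi^{\eta^{-1}}$ remains a quadratic character of $\GL_2$ by Lemma \ref{charso4}, Proposition \ref{GL2distinctionofPS} returns precisely the four bullet items of Theorem \ref{closedorbit}. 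The cases of $x_2$ under $\theta_{t_1}, \theta_{t_2}$ require a parallel computation in which $L_x$ may be a proper standard Levi of $M$, but the inducing data so produced embeds into the same list.

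For the open orbit $x_{11}$ I would apply the open-orbit analog of Proposition \ref{offen7.1} from \cite{offen2017}. By Proposition \ref{amazing} the stabilizer $M_{x_{11}}$ is the one-dimensional torus in $M_\beta \cong \GL_2$ given by $t_e(s,s)$, realized as $\so_2 \hookrightarrow \so_4$. I would compute the twisting character $\delta_{P_x}\delta_P^{-1/2}\chi^{\eta^{-1}}$ on this $\so_2$ using the explicit matrix realization and the code files referenced in Section \ref{sec-intro}. Because $\so_2$ sits inside the center of $\GL_2$ under this identification, Lemma \ref{lem-dist-central-char} applied at the level of $M_x \subset M$ forces the condition to become a constraint on the central character of $\sigma$, matching the cuspidal case of Proposition \ref{GL2distinctionofPS} and thus already appearing in Theorem \ref{closedorbit}.

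The principal obstacle will be the open orbit analysis: carefully computing $\delta_{P_x}$ on the small stabilizer $M_{x_{11}}$ and verifying that no \emph{new} distinguished representations arise beyond those of Theorem \ref{closedorbit}. A secondary difficulty is ensuring the analogous reduction for $\theta_{t_1}$ and $\theta_{t_2}$ does not produce additional representations outside the list, since there $M_\beta$ need not be $\theta$-stable and $L_x$ may be a proper Levi of $\GL_2$; in that situation one must verify that \emph{either} $M_x$-distinction of $\sigma$ already forces $\sigma$ into one of the four bullets of Theorem \ref{closedorbit}, \emph{or} the resulting induced representation coincides with one of those already listed.
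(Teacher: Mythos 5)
There is a genuine gap in your proposal, and it is located exactly where you predicted your ``principal obstacle'' to be: the treatment of the open orbit. The paper's proof hinges on a single, short observation that your proposal misses: Proposition \ref{offen7.1} and its open-orbit analog (Proposition 7.2 of \cite{offen2017}) both require the hypothesis $x \in N_{G,\theta}(M)$, which the paper identifies with \emph{strict admissibility} in the sense of Definition \ref{strictadm}. Proposition \ref{adm} shows that $x_{11}$ is \emph{not} strictly admissible for any of $\theta_{t_0},\theta_{t_1},\theta_{t_2}$; in particular the open orbit simply does not satisfy the hypothesis of the open-orbit proposition and contributes nothing to the theorem's list. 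Your proposal, by contrast, devotes an entire paragraph to applying the open-orbit proposition to $x_{11}$, computing a modular character on the one-dimensional stabilizer $\so_2 \subset M_\beta$ and invoking Lemma \ref{lem-dist-central-char} to force a constraint on central characters. That analysis is both irrelevant (it is not sanctioned by the hypothesis) and incorrect as an application of the proposition, since $x_{11}\notin N_{G,\theta}(M)$.

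Two secondary issues. First, you misread Proposition \ref{adm}: the strictly admissible elements are $x_7, x_{10}$ under $\theta_{t_0}$, $x_2$ under $\theta_{t_1}$ and $\theta_{t_2}$, and $x_5$ under $\theta_{t_2}$; you instead list $x_2, x_7, x_{10}$ under $\theta_{t_0}$, which is not what the proposition states. Second, you claim that for $x_2$ under $\theta_{t_1}, \theta_{t_2}$ one may face ``$L_x$ a proper standard Levi of $M$'' and that the resulting inducing data ``embeds into the same list'' --- but you give no argument, and the paper in fact shows the opposite: for the admissible closed orbits one has $\theta_x(M) = M$ and $M_x = M \cong \GL_2$, so the case $M_x = T$ never occurs, and Proposition \ref{closedorbits} already gives $L = M$ for $x_2$ (via the equality $M\eta_2 t_2\eta_2 = \eta_2 t_2\eta_2 M$). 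The paper's actual proof is a short elimination: strict admissibility restricts attention to $x_2, x_5, x_7, x_{10}$; among these only $x_2, x_7$ have orbits that are closed (and none are open); for those, $M_x = M \cong \GL_2$; that case is exactly Theorem \ref{closedorbit}. Nothing new arises from the open orbit because it is excluded at the first step, and no computation with small stabilizers is needed.
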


\begin{proof}
 We apply the Proposition \ref{offen7.1} and refer to Proposition 7.2 in \cite{offen2017} for the open orbit. Notice that the condition $x \in N_{G,\theta}(M)$ is equivalent to the condition of strict-admissibility as defined in the Definition \ref{strictadm}. 
\newline
First, we have shown that following this definition, the only strictly-admissible elements are $x_7, x_{10}, x_2$ and $x_5$, with only the orbits of $x_2$ and $x_7$ satisfying the closedness condition. In particular, the open orbit (corresponding to $x_{11}$) is shown to be non-admissible. Thus, to apply the Proposition \ref{offen7.1} and Proposition 7.2 in \cite{offen2017}, the following condition is necessarily satisfied: $\theta_x(M) = M$ and $M_x = M \cong GL_2$. In other words, the case where $M_x = T$ does not occur. The case where the orbit is closed, and $M \cong GL_2$ was treated in the Theorem \ref{closedorbit}. 
\end{proof}

\appendix

\section{Conventions for $\G_2$ used for SageMath computations}\label{sec-g2-apdx}

The appendix contains the necessary background information that allows one to embed the exceptional group $\G_2$ into the general linear group $\GL(8)$.  Realizing $\G_2$ as the automorphism group of an eight-dimensional Cayley algebra $\mathcal{C}$, and then computing the matrices of elements in root groups with respect to a chosen basis for $\mathcal{C}$ is enough to produce the embedding.  Such an embedding was used extensively to carry out the calculations in \emph{SageMath} that are used throughout the present paper.
The material in this appendix has been graciously provided by Steven Spallone who in turn would like to
acknowledge Gordan Savin for getting him started.  Any errors in what follows are the responsibility of the author.

\subsection*{Preliminaries on the Cayley algebra}
First, we describe the split Cayley algebra $\mathcal{C}$ over $F$.  Let $\mathrm{M}_2(F)$ be the algebra of $2\times 2$ matrices with entries in $F$. As an $F$-vector space, $\mathcal{C}=\mathrm{M}_2(F) \oplus \mathrm{M}_2(F)$ and a typical element of $\mathcal{C}$ can be  written as a pair $c=(x \mid y)$, where $x,y \in \mathrm{M}_2(F)$. Multiplication on $\mathcal{C}$ is given by
\begin{align*}
(x \mid y)(x' \mid y')=(xx'+ \adj{y'}y \mid y'x+ y \adj{x'}),
\end{align*}
for all $(x \mid y), (x' \mid y') \in \mathcal{C}$.
Here $\adj x$ is the usual adjugate matrix, which agrees with $(\det x) \cdot x^{-1}$ when $x \in \mathrm{M}_2(F)$ is invertible.  The algebra $\mathcal{C}$ has an identity $e=(I_2 \mid 0)$, where $I_2$ is the $2\times 2$ identity matrix, and the subspace spanned by $e$ is the centre of $\mathcal{C}$.

There is a conjugation map on $\mathcal{C}$ given by 
\begin{align*}
\ol{(x \mid y)}=(\adj x \mid -y),
\end{align*}
and a norm map $N: \mathcal{C} \to F$ given by
\begin{align*}
N((x \mid y))=\det x-\det y.
\end{align*}
For any $c \in \mathcal{C}$, the trace of $c$ is defined to be $c+ \ol c$.
If $c = (x \mid y)$, then 
\[
c + \ol{c} = \tr(x) e,
\]
which we identify with the usual trace $\tr(x) \in F$ of $x$.
Thus, we abuse notation and write $\tr: \mathcal{C} \rightarrow F$ for the map $c \mapsto c + \ol{c}$. 
The bilinear form determined by $N$, namely the pairing defined for $c, d \in \mathcal{C}$ by 
\begin{align*}
\ip{c}{d} &= N(c+d)-N(c)-N(d)
		%&= c \ol d +  d \ol c \\
\end{align*}
is non-degenerate.  Observe that $\ip{c}{d} =  \tr(c \ol d)$ for all $c,d \in \mathcal{C}$; in particular,  $\ip{c}{e}=\tr(c)$, for all $c \in \mathcal{C}$.

\subsection*{The Automorphism Group of $\mathcal{C}$}
Let $G$ be the group of automorphisms of the algebra $\mathcal{C}$.  It is now well known that $G$ is a split semisimple algebraic group of type $\G_2$  (this was first proved by E.~Cartan \cite{cartan1914}). By \cite{veldkamp}, the elements of $G$ stabilizing $\mathcal{A}_{2} = \left(\begin{array}{cc|cc} 
 *& *  & 0 & 0 \\
 *& *  & 0 & 0 \\
\end{array} \right)$ are of the form $\varphi_{c,p}$, where
\begin{equation*}
\varphi_{c,p}(x \mid y)=(cxc^{-1} \mid pcyc^{-1}),
\end{equation*}
with $c \in \GL_2(F)$ and $p \in \SL_2(F)$.

Let $\lambda_1,\lambda_2 \in F^\times$.  Let $a_{\lambda_1, \lambda_2} \in \GL_2(F)$ be the diagonal matrix 
\begin{align*}
a_{\lambda_1,\lambda_2}= \left( \begin{array}{cc}
\lambda_1 & 0 \\
0 & \lambda_2 \\
\end{array} \right).
\end{align*}
Then define the element $\gamma(\lambda_1,\lambda_2) \in G$ via

\begin{equation}\label{torus}
\gamma(\lambda_1,\lambda_2)(x \mid
 y) = \left( \Int(a_{\lambda_1,\lambda_2})(x)  \mid 
 a_{\lambda_2, {\lambda_2}^{-1}}
\Int(a_{\lambda_1,\lambda_2})(y)
 \right),
\end{equation}
for all $(x \mid y) \in \mathcal{C}$.
Recall that  $\Int(g)(x)=gxg^{-1}$, for any $g\in \GL_{2}(F)$ and $x \in \mathrm{M}_{2}(F)$. 
%Here, it is more clear to write $\Int(g)$ rather than $\Int_{g}$ as above.

Let $T=\{ \gamma(\lambda_1,\lambda_2) :  \lambda_1,\lambda_2 \in F^{\times} \}$; then $T$ is a maximal torus of $G$.
Let $\gamma=\gamma(\lambda_1, \lambda_2) \in T$.  Define $\alpha(\gamma)=\lambda_1 {\lambda_2}^{-1}$ and $\beta(\gamma)={\lambda_2}^2 {\lambda_1}^{-1}$.
Then we have
\begin{align*}
(\alpha+\beta)(\gamma)& =\lambda_2, \\
 (2 \alpha+ \beta)(\gamma)& =\lambda_1, \\ 
 (3 \alpha+ \beta)(\gamma)&={\lambda_1}^2 {\lambda_2}^{-1}, & \text{and} \\
  (3 \alpha+ 2 \beta)(\gamma)&=\lambda_1 \lambda_2.
\end{align*}
Let $s=\left( \begin{array}{cc}
0 & -1 \\
1 & 0 \\
\end{array} \right)$ and define $w_G \in G$ by 
\begin{align*}
{w_G}((x \mid y))=(s x s^{-1}  \mid sys^{-1}). 
\end{align*}
Then conjugation by $w_G$ acts by inversion on $T$, and thus represents the longest Weyl group element of $T$ in $G$.

\subsection*{Lie algebra $\mathfrak{g}$}
The Lie algebra $\mathfrak{g}$ of $G$ can be identified with the algebra of derivations of $\mathcal{C}$. Recall that a derivation of $\mathcal{C}$ is a linear map $D: \mathcal{C} \to \mathcal{C}$ so that
\begin{align*}
D(cd)=D(c)d + c D(d),
\end{align*}
for all $c, d \in \mathcal{C}$.

The adjoint action of $G$ on $\mathfrak{g}$ is given by $(\Ad(g)D)(c)=g(D({g^{-1}}c))$, for all derivations $D \in \mathfrak{g}$ and $c\in\mathcal{C}$. Let $\mathfrak t$ denote the Lie algebra of the torus $T$. Let $\lambda_{1},\lambda_{2}\in F$ and let $\gamma(\lambda_{1},\lambda_{2})\in \mathfrak{t}$. Let
\begin{align*}
t= a_{\lambda_1, \lambda_2} = \left( \begin{array}{cc}
\lambda_1 & 0 \\
0 & \lambda_2 \\
\end{array} \right).
\end{align*}

It is easy to see that for an element $(x\mid y)\in \mathcal{C}$ we have
\begin{equation*} 
{}^{\gamma(\lambda_{1},\lambda_{2})}(x \mid y)=([t,x], \tr(t)y-yt).
\end{equation*}

\subsection*{One-parameter root subgroups of $G_{2}$}
\subsubsection*{Root subgroup and other objects related to $\alpha$}
For any $t \in F$, define $u_\alpha(t), u_{-\alpha}(t) \in G$ by 
\begin{align*}
u_\alpha(t)( x \mid y)=( \Int(V(t)) x \mid y V(-t) )
\end{align*}
and
\begin{align*} 
u_{-\alpha}(t)( x \mid y)=(\Int (\ol V(t)) x \mid y \ol V(-t)),
\end{align*}
for all $(x \mid y ) \in \mathcal{C}$, 
where
\begin{align*}
V(t)=\left( \begin{array}{cc} 
 1 & t  \\
 0 & 1  \\
\end{array} \right)\ \ \ \ {\rm and} \ \ \ \ \ol V(t)= \left( \begin{array}{cc} 
 1 & 0  \\
 t & 1  \\
\end{array} \right).
\end{align*}
Explicitly,
\begin{align*}
u_\alpha(t)  \left( \begin{array}{cc|cc} 
 x_1&x_2  & y_1 & y_2 \\
 x_3&x_4  & y_3 & y_4 \\
\end{array} \right)= \left( \begin{array}{cc|cc} 
 x_1+tx_3&x_2 + t(x_4-x_1)-t^2x_3  & y_1 & y_2 -ty_1\\
 x_3&x_4-tx_3  & y_3 & y_4 -ty_3\\
\end{array} \right)
\end{align*}
Let $n_{\alpha}$ be a representative of the reflection in $W_{G_{2}}$ corresponding to the root $\alpha$. Following Chevalley's recipe (see \cite[$\S32.3$]{Humphreys-book}, for instance), one easily computes that $n_\alpha=u_\alpha(1) u_{-\alpha}(-1) u_\alpha(1)$ is given by
\begin{align*}
n_\alpha(x \mid y)=(sxs^{-1} \mid ys)
\end{align*}
where $s=\left( \begin{array}{cc}
0 & -1 \\
1 & 0 \\
\end{array} \right)$, as above. Note that $n_\alpha^2=\gamma(-1,-1)$, and that 
\begin{align*}
n_{\alpha} \gamma(\lambda_1, \lambda_2) n_{\alpha}^{-1}=\gamma(\lambda_2, \lambda_1).
\end{align*}

\subsubsection{Root subgroup and other objects related to $\beta$}

If $a,b \in \mathcal{C}$, define the map $L_{a,b}:\mathcal{C} \rightarrow \mathcal{C}$ by
\begin{align*}
L_{a,b}(c)=\ip{c}{a} b-\ip{c}{b}a,
\end{align*}
for all $c \in \mathcal{C}$.
Take
\begin{align*}
x_0=\left( \begin{array}{cc|cc} 
0 & 0 & 0 & 0 \\
1 & 0 & 0 & 0 \\
\end{array} \right),  
w_0=\left( \begin{array}{cc|cc} 
0 & 0 & 1 & 0 \\
0 &0  & 0 & 0 \\
\end{array} \right),
\end{align*} 
and define $D_{\beta}=L_{w_0,x_0}$. Put
\begin{align*}
x_0'=\left( \begin{array}{cc|cc} 
0 & -1 & 0 & 0 \\
0 & 0 & 0 & 0 \\
\end{array} \right),
w_0'=\left( \begin{array}{cc|cc} 
0 & 0 & 0 & 0 \\
0 & 0 & 0 & 1 \\
\end{array} \right),
\end{align*} 
and define $D_{-\beta}=L_{w_0',x_0'}$. It is straightforward to check that $D_{\beta}$ and $D_{\beta}$ are the root vectors for the roots $\beta$ and $-\beta$ in the Lie algebra $\mathfrak{g}$ of $G$. 
  
Then for $t \in F$ and $c \in \mathcal{C}$, define  
\begin{align*}  
  u_\beta(t)(c)=c+ L_{w_0,x_0}(tc).
  \end{align*}
  and
  \begin{align*}
  u_{-\beta}(t)(c)=c+ L_{w_0',x_0'}(tc).
  \end{align*}
Explicitly,
\begin{align*}
u_\beta(t)    \left( \begin{array}{cc|cc} 
 x_1&x_2  & y_1 & y_2 \\
 x_3&x_4  & y_3 & y_4 \\
\end{array} \right)=  \left( \begin{array}{cc|cc} 
 x_1&x_2  & y_1+tx_2 & y_2 \\
 x_3-ty_4&x_4  & y_3 & y_4 \\
\end{array} \right)
\end{align*}
and
\begin{align*}
u_{-\beta}(t)    \left( \begin{array}{cc|cc} 
 x_1&x_2  & y_1 & y_2 \\
 x_3&x_4  & y_3 & y_4 \\
\end{array} \right)=  \left( \begin{array}{cc|cc} 
 x_1&x_2 +ty_1 & y_1 & y_2 \\
 x_3 & x_4  & y_3 & y_4 -t x_3 \\
\end{array} \right).
\end{align*} 
  
Let $n_{\beta}$ denote the representative of the reflection corresponding to $\beta$ in $W_{G_{2}}$. Following Chevalley's recipe, as above, if we set $n_\beta=u_\beta(1) u_{-\beta}(-1) u_\beta(1)$, then
\begin{align*} 
n_\beta(c)=c+ \ip{c}{w-x'} x + \ip{c}{w'-x}w - \ip{c}{w'+x}x' + \ip{c}{w+x'}w'.
\end{align*}
Explicitly
\begin{align*}
n_\beta   \left( \begin{array}{cc|cc} 
 x_1&x_2  & y_1 & y_2 \\
 x_3&x_4  & y_3 & y_4 \\
\end{array} \right)=
  \left( \begin{array}{cc|cc} 
 x_1&-y_1  & x_2 & y_2 \\
 -y_4&x_4  & y_3 & x_3 \\
\end{array} \right).
\end{align*}
 Note that $n_\beta^2 =\gamma(1,-1)$, and that 
 \begin{align*}
 n_\beta\gamma(\lambda_1, \lambda_2) n_\beta^{-1}=\gamma(\lambda_1,\lambda_1 \lambda_2^{-1}).
  \end{align*}
  
\subsubsection{More root subgroups}
The formula $\Int(n_\alpha)u_{3 \alpha+ \beta}(t)=u_\beta(t)$ gives
\begin{align*}
u_{3 \alpha+ \beta}(t)  \left( \begin{array}{cc|cc} 
 x_1&x_2  & y_1 & y_2 \\
 x_3&x_4  & y_3 & y_4 \\
\end{array} \right)= \left( \begin{array}{cc|cc} 
 x_1&x_2-ty_3  & y_1 & y_2-tx_3 \\
 x_3&x_4  & y_3 & y_4 \\
\end{array} \right).
\end{align*}

The formula $\Int(n_\beta) u_{\alpha+ \beta}(t)=u_\alpha(-t)$ gives
\begin{align*}
u_{\alpha+ \beta}(t)
   \left( \begin{array}{cc|cc} 
 x_1&x_2  & y_1 & y_2 \\
 x_3&x_4  & y_3 & y_4 \\
\end{array} \right)= \left( \begin{array}{cc|cc} 
 x_1+ty_4&x_2  & y_1+t(x_4-x_1)-t^2 y_4 & y_2+tx_2 \\
 x_3+ty_3&x_4-ty_4  & y_3 & y_4 \\
\end{array} \right) .
\end{align*} 

 The formula $\Int(n_\alpha) u_{2\alpha+ \beta}(t)=u_{\alpha+\beta}(-t)$ (see for instance \cite{Humphreys-book}[3.35] gives
\begin{align*}
u_{2\alpha+ \beta}(t)
   \left( \begin{array}{cc|cc} 
 x_1&x_2  & y_1 & y_2 \\
 x_3&x_4  & y_3 & y_4 \\
\end{array} \right)=  \left( \begin{array}{cc|cc} 
 x_1-ty_3&x_2+ty_4  & y_1-tx_3 & y_2+t(x_4-x_1)+t^2y_3 \\
 x_3&x_4+ty_3  & y_3 & y_4 \\
\end{array} \right).
\end{align*} 

The formula $\Int(n_\beta)u_{3\alpha+ 2\beta}(t)= u_{3\alpha+\beta}(-t)$ gives
\begin{align*}
u_{3\alpha+ 2\beta}(t)
   \left( \begin{array}{cc|cc} 
 x_1&x_2  & y_1 & y_2 \\
 x_3&x_4  & y_3 & y_4 \\
\end{array} \right)=  \left( \begin{array}{cc|cc} 
 x_1&x_2  & y_1-ty_3 & y_2-ty_4 \\
 x_3&x_4  & y_3 & y_4 \\
\end{array} \right).
\end{align*} 

\subsection{Embedding $\G_2$ into $\GL(8)$}
The algebra $\mathcal{C}$ is eight-dimensional and has ordered basis $\mathcal{B} = \{e_{11}, e_{21}, e_{31}, e_{41}, e_{12}, e_{22}, e_{32}, e_{42}\}$ where
\begin{align*}
e_{11} & = 	  \left( \begin{array}{cc|cc} 
 1 & 0  & 0 & 0 \\
 0 & 0  & 0 & 0 \\
\end{array} \right) 
& e_{12} & = \left( \begin{array}{cc|cc} 
 0 & 0  & 1 & 0 \\
 0 & 0  & 0 & 0 \\
\end{array} \right) \\
e_{21} & = 	  \left( \begin{array}{cc|cc} 
 0 & 1  & 0 & 0 \\
 0 & 0  & 0 & 0 \\
\end{array} \right) 
& e_{22} & = \left( \begin{array}{cc|cc} 
 0 & 0  & 0 & 1 \\
 0 & 0  & 0 & 0 \\
\end{array} \right) \\
e_{31} & = 	  \left( \begin{array}{cc|cc} 
 0 & 0  & 0 & 0 \\
 1 & 0  & 0 & 0 \\
\end{array} \right) 
& e_{32} & = \left( \begin{array}{cc|cc} 
 0 & 0  & 0 & 0 \\
 0 & 0  & 1 & 0 \\
\end{array} \right) \\
e_{41} & = 	  \left( \begin{array}{cc|cc} 
 0 & 0  & 0 & 0 \\
 0 & 1  & 0 & 0 \\
\end{array} \right) 
& e_{42} & = \left( \begin{array}{cc|cc} 
 0 & 0  & 0 & 0 \\
 0 & 0  & 0 & 1 \\
\end{array} \right).
\end{align*}
Calculating the matrices of $\gamma(\lambda_1, \lambda_2) \in T$, and the root groups described above with respect to the basis $\mathcal{B}$ is enough to embed $G \cong \G_2$ into $\GL(8)$.

\section{The code}
The code is organized in two branches: one \enquote{main} branch where all files needed to justify the results presented in this article are available, the second branch, \enquote{old strategy} (see also the next subsection), containing all the results of our old strategy explained below. 

\subsection*{Details on the codes related to a previous strategy which did not pay off}

This other strategy consisted in:
\begin{itemize}
\item Modifying the elements $x$ to $x' = u . x = ux \theta(u)^{-1}=u x t_0 u^{-1} t_0$. Note that for $x'$, the $w$ (and hence the $L$) remains the same as $x$ (as $PxP= Px'P$). So we need to calculate $u$ such that there exists an element $m \in L$ such that the following equation has a solution $u x t_0 u^{-1} t_0 =mw$ (Note that above we have the freedom to choose $m$ which we can use for our convenience). The reader may be wondering why we chose $u$ from $U$ (instead of a general element in $P$) then that is just for simplicity of computations. Once we know $u$, we have $x'$ and then we will calculate $L_{x'}$, and $U_{x'}$. The computations of $x'$ are available in the github file.
\item Calculating the values of $L$ with the formula $L = M(w\tau) = M \cap w\theta(M)w^{-1}$ page 217 of \cite{offen2017} rather than with the formula page 212: $L= M\cap \eta\theta(\eta^{-1}M\eta)\eta^{-1}$.
\item Calculating the $L_{x'}$ but also $U_{x'}$ using the two previous points.
\end{itemize}
%However, from now on,\textbf{by abuse of notation}, and to be consistent with the existing literature, in particular \cite{offen2017}, we will denote this $x'$ by simply $x$.
%If modifying $x$ by an element of the unipotent radical doesn't work we might have to modify it by
%a generic element of the parabolic itself.
\noindent
It is very likely that the context of working with \emph{SageMath} make all our computations extremely sensitive and modifications as above which could be harmless in another context lead to completely different (and often non-conclusive) results. We, however, decided to include the relevant codes so that the curious reader could explore, and possibly find her way around a similar strategy. Some of the codes written along this path could also be useful to other authors, in particular the delta functions computations.

\vspace{0,2cm}

In this old strategy, we also used the matching results presented in Subsection \ref{matching} and rather worked with $w_{\eta}$ (the unique match of an element $\eta$) in the definition of admissibility and the calculating of $L = M \cap w M w^{-1}$. Since $M \cong \GL_2$, the only subgroups of $M$ are $M$ and the torus. Looking at the possibility of existence of an element in the intersection of $M$ and $w M w^{-1}$, we obtained the following result:

\begin{equation} \label{L}
\left\{
   \begin{array}{lll}
         W_{\beta}\backslash W\slash W_{\beta} & L \\
          e & M \\
         w_{\alpha} & T & ~~\hbox{and}~~ wMw^{-1} \neq w^{-1}Mw\\
         w_{\alpha}.w_{\beta}.w_{\alpha} & T &~~\hbox{and}~~ wMw^{-1} = w^{-1}Mw\\
         w_0 & M & ~~\hbox{and}~~ wMw^{-1} = w^{-1}Mw\\
    \end{array}
\right.
\end{equation}

Although we have explained that the admissibility results were difficult to interpret or misleading using $w_{\eta}$ rather than $x$, we believe the computations of $L$ are confirmed using the more general definition $L= M\cap\eta\theta(\eta^{−1}M\eta)\eta^{−1}$, as given in Theorem 1.1 in \cite{offen2017}. 

%-------------------Bibliography----------------------------
%% ***   Set the bibliography style.   ***
%% (change according to your preference)
\bibliography{g2so4}

\bibliographystyle{plain} 

%
%-------------------End document----------------------------
\end{document}